\documentclass[11pt,reqno]{article}

\usepackage{subfigure}

\usepackage{latexsym}
\usepackage{graphics}
\usepackage{amsmath}
\usepackage{xspace}
\usepackage{amssymb}
\usepackage{psfrag}
\usepackage{epsfig}
\usepackage{amsthm}
\usepackage{pst-all}
\usepackage{hyperref}

\usepackage{color}

\definecolor{Red}{rgb}{1,0,0}
\definecolor{Blue}{rgb}{0,0,1}
\definecolor{Olive}{rgb}{0.41,0.55,0.13}
\definecolor{Yarok}{rgb}{0,0.5,0}
\definecolor{Green}{rgb}{0,1,0}
\definecolor{MGreen}{rgb}{0,0.8,0}
\definecolor{DGreen}{rgb}{0,0.55,0}
\definecolor{Yellow}{rgb}{1,1,0}
\definecolor{Cyan}{rgb}{0,1,1}
\definecolor{Magenta}{rgb}{1,0,1}
\definecolor{Orange}{rgb}{1,.5,0}
\definecolor{Violet}{rgb}{.5,0,.5}
\definecolor{Purple}{rgb}{.75,0,.25}
\definecolor{Brown}{rgb}{.75,.5,.25}
\definecolor{Grey}{rgb}{.5,.5,.5}

\setlength{\oddsidemargin}{-.20in}
\setlength{\evensidemargin}{-.20in} \setlength{\textwidth}{6.8in}
\setlength{\topmargin}{-0.6in} \setlength{\textheight}{9.1in}

\pagenumbering{arabic}

\newcommand{\pr}{\mathbb{P}}
\newcommand{\E}[1]{\mathbb{E}\!\left[#1\right]}
\newcommand{\R}{\mathbb{R}}

\newcommand{\N}{{\bf \mathcal{N}}}

\newcommand{\sign}{\mathrm{sign}}

\newcommand{\ignore}[1]{\relax}

\newtheorem{theorem}{Theorem}[section]

\newtheorem{lemma}[theorem]{Lemma}
\newtheorem{conjecture}[theorem]{Conjecture}
\newtheorem{prop}[theorem]{Proposition}

\newtheorem{coro}[theorem]{Corollary}

\newtheorem{Defi}[theorem]{Definition}

\newtheorem{Assumption}[theorem]{Assumption}

\newtheorem{algorithm}[theorem]{Algorithm}
\newtheorem*{lemma*}{Lemma}

\definecolor{Red}{rgb}{1,0,0}
\definecolor{Blue}{rgb}{0,0,1}
\definecolor{Olive}{rgb}{0.41,0.55,0.13}
\definecolor{Green}{rgb}{0,1,0}
\definecolor{MGreen}{rgb}{0,0.8,0}
\definecolor{DGreen}{rgb}{0,0.55,0}
\definecolor{Yellow}{rgb}{1,1,0}
\definecolor{Cyan}{rgb}{0,1,1}
\definecolor{Magenta}{rgb}{1,0,1}
\definecolor{Orange}{rgb}{1,.5,0}
\definecolor{Violet}{rgb}{.5,0,.5}
\definecolor{Purple}{rgb}{.75,0,.25}
\definecolor{Brown}{rgb}{.75,.5,.25}
\definecolor{Grey}{rgb}{.5,.5,.5}
\definecolor{Pink}{rgb}{1,0,1}
\definecolor{DBrown}{rgb}{.5,.34,.16}
\definecolor{Black}{rgb}{0,0,0}




\newcommand{\calE}{\mathcal{E}}
\newcommand{\calH}{\mathcal{H}}
\newcommand{\calB}{\mathcal{B}}
\newcommand{\calM}{\mathcal{M}}
\newcommand{\abs}[1]{\lvert#1\rvert}
\newcommand{\norm}[1]{\lVert#1\rVert}



\usepackage{float}

\author{
{\sf David Gamarnik\thanks{Operations research Center and Sloan School of Management, MIT. Email: gamarnik@mit.edu} 
and Aukosh Jagannath\thanks{Department of Statistics and Actuarial Sciences and Department of Applied Mathematics, 
University of Waterloo. Email:a.jagannath@uwaterloo.ca}}
}

\begin{document}

\title{The Overlap Gap Property and Approximate Message Passing Algorithms for $p$-spin models}
\date{\today}

\maketitle

\begin{abstract}
We consider the algorithmic problem of finding a near ground state (near optimal solution) of a $p$-spin model. We show that for a class of algorithms
broadly defined as Approximate Message Passing (AMP), the presence of the Overlap Gap Property (OGP), appropriately defined,
is a barrier.
We conjecture that when $p\ge 4$ the model does indeed exhibits OGP (and prove it for the space of binary solutions). 
Assuming the validity of this conjecture, as an implication, the AMP fails to find near ground states in these models, per our result. We extend our result
to the problem of finding pure states by means of Thouless, Anderson and Palmer (TAP) based iterations, which is yet another example
of AMP type algorithms. We show that such iterations fail to find pure states
approximately, subject to the conjecture that the space of pure states exhibits the OGP, appropriately stated, when $p\ge 4$.
\end{abstract}


\section{Introduction}
Given an $N$-tensor $A=(A_{i_1,\ldots,i_p}, 1\le i_1,\ldots,i_p\le N)\in (\R^N)^{\otimes p}$ 
of order $p$ and an $N$-vector $u\in\R^N$, define the usual inner tensor product by
\begin{align}\label{eq:A(u)}
A(u)\triangleq \sum_{1\le i_1,\ldots,i_p\le N}A_{i_1,\ldots,i_p}u_{i_1}\cdots u_{i_p}.
\end{align}
Consider the associated normalized variational problem over the binary cube $\calB_N\triangleq \{-1,1\}^N$:
\begin{align}\label{eq:GroundState}
\eta_N\triangleq {1\over N}\min_{\sigma\in \calB_N}A(\sigma).
\end{align}
The case when $A$ consists of i.i.d. zero mean Gaussian random entries with variance $1/N^{p-1}$ that is $\mathcal{N}\left(0,{1\over N^{p-1}}\right)$
corresponds to the problem of finding a ground state of a $p$-spin model with Gaussian couplings and the (unique) vector $u^*$ achieving the minimization
value is called the ground state~\cite{panchenko2013sherrington}. The choice of variance $1/N^{p-1}$ and the normalization $1/N$ is dictated by the associated
Gibbs distribution defined by assigning probability weight proportional to $\exp\left(-\beta A(\sigma)\right)$ to each $\sigma\in \calB_N$ for some fixed inverse
temperature parameter $\beta\in \R_+$. In this case the partition function 
\begin{align*}
Z\triangleq \sum_{\sigma\in \calB_N}\exp(-\beta A(\sigma))
\end{align*}
is well approximated by $N\eta_N$ as $\beta$ increases and $\eta_N$ is known to converge to a strictly negative limiting 
value $\eta^*<0$ with high probability (w.h.p.)
as $N\to\infty$.
For us though the details of the choice of scaling are immaterial and the  variational
problem above is equivalent to the case when $A$ consists of i.i.d. standard normal entries and the normalization $1/N$ is skipped. Another standard
assumption in the literature is to assume a symmetry of $A$, for example assuming that entries are fully determined by i.i.d. entries
corresponding to $i_1\le \cdots \le i_p$. This difference is again immaterial. Indeed, consider the tensor $\bar A$ defined by
\begin{align*}
\bar A_{i_1,\ldots,i_p}={1\over p!}A^\pi,
\end{align*}
where $A^\pi$ is defined by
\begin{align*}
A^\pi_{i_1,\ldots,i_p}=A_{\pi(i_1,\ldots,i_p)}, \qquad 1\le i_1,\ldots,i_p\le N,
\end{align*}
for any permutation $\pi$  of  $1,\ldots,p$. Note that $\bar A$ is symmetric and satisfies $\bar A(u)=A_\pi(u)$ for every $\pi$.

In the present paper we focus on the algorithmic question of solving the minimization problem (\ref{eq:GroundState}) approximately and
efficiently (in polynomial time). That is, the question is one of existence of a polynomial time algorithm which for every $\epsilon>0$ 
produces a sequence of solution $\sigma_N\in \calB_N$ satisfying 
\begin{align*}
A(\sigma_N)/N\le (1-\epsilon)\eta^*,
\end{align*}
as $N\to\infty$, ideally w.h.p. as $N\to\infty$. This problem was successfully solved recently by Montanari~\cite{montanari2018optimization} 
in the case of the Sherrington-Kirkpatrick
model, which is the special case corresponding to $p=2$. The  result though assumes the validity of 
a (widely believed) conjecture that the overlap distribution function is strictly increasing.  
In particular, it  assumes the absence of an interval $[\nu_1,\nu_2]$ inside the support of the overlap distribution with zero mass, namely that
the overlap distribution does not exhibit the Overlap Gap Property (OGP). 
The algorithm is based on a variant of Approximate Message Passing (AMP)
type algorithms, which in the context of spin glasses is well motivated by the so-called Thouless, Anderson and Palmer (TAP) equation describing
the magnetization of spins in spin glass models. AMP, as a class of algorithms was also found to be one of the most effective classes of algorithm in many models of 
signal processing~\cite{kabashima2003cdma,donoho2009message,bolthausen2014iterative,bayati2011dynamics,javanmard2013state,
bayati2015universality,berthier2017state}, specifically models involving a ''planted signal'' (which does not apply to our $p$-spin model).
The algorithmic result of~\cite{montanari2018optimization} in its order was inspired by a similar result by Subag~\cite{subag2018following} 
regarding the problem of finding a near 
ground states in a spherical``mixed $p$-spin''  model. Here one considers a linear combination of objectives of  the form \eqref{eq:A(u)} as one varies $p$,
with the coefficients being fixed, and optimizing over the unit sphere $\{u:\|u\|_2 \leq 1\}$ instead of $\calB_N$.
Here a polynomial time constrictions of near optimal solutions is provided, under the assumption that the model does not exhibit OGP
(see part (2) of Proposition~1 in~\cite{subag2018following}). For the case of spherical models,
the necessary and sufficient conditions for the OGP are known~\cite{ChenSen15,JagTobLT16,TalSphPF06}. Both $p$-spin and spherical $p$-spin models
are related to the Random Energy Model (REM) considered from the algorithmic perspective by Addario-Berry  and Maillard~\cite{addario2018algorithmic}, 
where in contrast to~\cite{montanari2018optimization} and \cite{subag2018following} algorithmic hardness is established away from the ground state value.
One should note, however that REM is an oracle based optimization problem and thus does not fit classical input size based algorithmic complexity 
questions arising in the context of $p$-spin and spherical $p$-spin models.

At the same time it is known that the OGP does take place in $p$-spin models when $p\ge 4$, as was established in~\cite[Theorem 3]{chen2019suboptimality}.
In particular, it was shown that for every even $p\ge 4$, there exists $\mu>0, 0<\nu_1<\nu_2<1$ such that w.h.p.
for every pair of  solutions $\sigma_1,\sigma_2$ satisfying $A(\sigma_j)/N\le \eta^*+\mu$ for $j=1,2$, the associated normalized overlap, defined simply
as the normalized absolute value of the 
inner product $(1/N)|\langle \sigma_1,\sigma_2\rangle|$ is either at most  $\nu_1$ or at least some  $\nu_2$.
 This naturally raises the question as to whether the OGP creates a barrier to the success of AMP when $p\ge 4$. 
The main result of our paper, Theorem~\ref{theorem:Main-result}, 
is to establish precisely this fact under the assumption that a certain relaxed version of the OGP takes place when $p\ge 4$. 
The relaxed version concerns the optimization problem $\min A(u)$ when $u$ is relaxed to be in Hilbert cube $u\in [-1,1]^N\triangleq \calH_N$, and otherwise
is defined in the same way  as for $\calB_N$. This relaxed version would be a rather straightforward implication of the OGP for binary solutions
if one could show that every nearly optimal solution in $\calH_N$ is nearly binary. Unfortunately, even this fact is not known, and we leave it as an interesting,
though, as we believe, an approachable open problem. As a consequence of our main result, we show that extension of the AMP result 
of~\cite{montanari2018optimization} to the case $p\ge 4$ is not possible. As another implication, we show that a natural iterative scheme of computing the fixed
point of the TAP equations fails as well in the case $p\ge 4$. We note, that this iterative scheme is known to succeed in the high temperature regime
due to the result of~\cite{bolthausen2014iterative}. Another important class of algorithms ruled out by our negative result is gradient descent type 
algorithms. Since the gradient of $A(u)$ is a linear combination of the vectors of the form $A(\cdot,u)$ defined below in (\ref{eq:A(u)marginal}), 
then a discrete implementation of the gradient descent algorithm in the form $u^t=u^{t-1}+\eta_{t-1}\nabla A(u^{t-1})$ for some step choices $\eta_t$ 
is also a special case of our class of AMP algorithms.

One challenge in establishing our result formally is the formalization
of the class of AMP algorithms to begin with. 
Unfortunately, there is no one formal definition for it, but rather there is a vaguely proposed scheme for a class of iterations inspired by the Belief
Propagation type algorithms. The iterations take  the form $u^{t+1}=F_t(G_t(u^t),G_{t-1}(u^{t-1}),\ldots,G_0(u^0)), t\ge 0$ 
and are performed for some constant number of rounds 
$t=0,1,\ldots,T$,  where $F^t$ is an in general  $t$-dependent 
function involving vector $A(\cdot, u)\in \R^N$ defined by 
\begin{align}\label{eq:A(u)marginal}
A(\cdot, u)\triangleq \left(\sum_{i_2,\ldots,i_p}A_{i,i_2,\ldots,i_p}u_{i_2}\cdots u_{i_p}, 1\le i\le N\right),
\end{align}
for any $u\in\R^N$, 
as well other non-linear operators $G_t:\R^N\to\R^N$ defined typically through some kind of univariate or $t$ variant 
non-linear functions $g_t:\R^t\to \R$ applied coordinate-wise in some way. We note that (\ref{eq:A(u)marginal}) is simply a matrix vector product $Au$ when $p=2$.

Thus, as a first step, we introduce a precise class of such iterative  
algorithms (functions) $F^t$, and  associate it with a precise set of assumptions. We show separately that the algorithm
of~\cite{montanari2018optimization} is  a special case.  We assume that the results $U^t, 0\le t\le T$ of each
iterations are truncated so that the resulting vector always belongs to $\|\cdot\|_\infty$ bounded region of the form 
$[-M,M]^N$ for some constant $M$.
The rational for the truncation is as follows. In the implementation of the AMP the iterations $F^t, 0\le t\le T$
produce a real valued vector $U^T\in \R^N$ which is then projected to a vector in $\calB_N$ in a way discussed below. 
The idea here is  that $U^T$ is a vector which is ''close enough'' to
some vector $\sigma\in \calB_N$ which is a near-ground state. In particular, the typical entries of $U^t$ are ''not too far'' from interval $[-1,1]$, and in particular
are bounded by $M$. We restrict every vector $U^t$ to be in $[-M,M]^N$ for technical convenience.
The rounding scheme $[-M,M]^N\to \calB_N$ assumed to be adopted by our class of AMP 
is similar the one that was used in~\cite{montanari2018optimization}:  first $U^T$ is projected to vector $V\in \calH_N$ 
via a natural truncation $x\to \min(\max(x,-1),1)$, and then some rounding
scheme $\Pi:\calH_N\to \calB_N$ is adopted by the algorithm designer, which is guaranteed asymptotically to never lower the quality of the solution, 
that is, it guarantees that $A(\Pi(V))/N\le A(V)/N+o(1)$.  Our main result, stated more precisely, says that
for \emph{any} AMP algorithm thus defined, the  vector $V$ is w.h.p. sub-optimal, namely  $A(V)/N$ exceeds 
$\eta^*$ by some fixed constant $\mu>0$, w.h.p. as $N\to\infty$. Thus we establish that the vector $V$ obtained in the pen-ultimate (before $\Pi$)
step of the AMP is sub-optimal. We note that it is precisely this vector which is shown to be nearly optimal in the case $p=2$ in the argument
of~\cite{montanari2018optimization}. The last step of converting a real vector $V\in \calH_N$ to $\Pi(V)\in \calB_N$ is just used there in order 
to obtain a genuinely binary vector.
We do not establish that the ultimate vector $\Pi(V)\in \calB_N$ is sub-optimal, and this is a limitation of our technique.
We note however that showing near optimality of $\pi(V)$ without showing near optimality of $V$ 
 would amount to believing that the rounding
$\Pi$ is somehow mysteriously capable of producing near optimal binary solution $\Pi(V)$ from a presumably
far from optimal fractional solution $V\in \calH_N$, which is something which does not 
seem to be plausible, and something which not established in~\cite{montanari2018optimization}. 
Nevertheless, it would be admittedly a more complete result to show directly that $\Pi(V)$ is far from optimal, without assuming the 
same for $V$, but we are  currently unable to make this argument
rigorous, and leave it for further investigation.

\subsubsection*{Proof of the main result. Outline} We now describe the main ingredients of our proof. 
First, as a consequence of a result established in~\cite{chen2019suboptimality}, we show that the OGP holds w.h.p. not just for one instance of $A$,
but for a continuous family of sets of the form $\mathcal{A}=\left(\sqrt{1-\tau}A+\sqrt{\tau}\hat A\right), \tau\in [0,1]$ where $\hat A$
is an independent instance of $A$. Note that for each fixed $\tau$, the corresponding tensor has the same distribution as $A$.
An easy consequence of the OGP result in ~\cite{chen2019suboptimality} and the chaos result of~\cite{chen2018energy}, is the fact, which 
we prove in this paper (Theorem~\ref{theorem:OGP-for-tensors}), 
that the OGP holds for $\mathcal{A}$ as well in the sense that for any two
$A_{\tau_1},A_{\tau_2}\in\mathcal{A}$ and any $\sigma_1,\sigma_2\in\calB_N$ satisfying 
$A_{\tau_j}(\sigma_j)/N\le \eta^*+\mu$, it 
is again the case that $N^{-1}|\langle \sigma_1,\sigma_2\rangle|\in [0,\nu_1]\cup [\nu_2,1]$, for the same values $\nu_1,\nu_2,\mu$. 
The chaos property roughly speaking  says for any fixed $\tau_1\ne\tau_2$ near optimal solutions of $A_{\tau_1}$ and $A_{\tau_2}$
are nearly orthogonal, see Theorem~\ref{theorem:chaos} below. Our main conjecture regarding OGP 
(Conjecture~\ref{conjecture:OGP-for-tensors-Hilbert-cube}), which we use as an 
assumption of the main result, is the conjecture that OGP
holds in fact for near optimal solutions in $\calH_N$ as opposed to those in $\calB_N$, for the same family of instances. Establishing
this conjecture is an interesting open question.

Our main ingredient of the proof is then to show that the iterations $U^T=U^T(A)$ as function of $A$ are sufficiently ''continuous'' to perturbation of the entries
of $A$. Specifically, we obtain an upper bound on $N^{-1}\|U^T(A_\tau)-U^T(A_0)\|_2$ for the interpolation scheme $\mathcal{A}$ which 
is sufficiently continuous in $\tau$. This result is the subject of Theorem~\ref{theorem:iteration-bounds}.
A straightforward implication is that the same bound applies to $N^{-1}\|V(A_\tau)-V(A_0)\|_2$, where, as we recall $V(A)$ is projection 
of $U^T(A)$ through the truncation $x\to\min(\max(x,-1),1)$. 
Separately, we use the independence of $A$ and $\hat A$ to argue the near orthogonality of $V(A)$ and $V(\hat A)$. The continuity result above then is used
to show  that for an appropriate choice of  $\tau$, it will hold  that $N^{-1}\langle V(A_\tau),V(A)\rangle \in (\nu_1,\nu_2)$. 
The (conjectured) OGP
property implies that this choice of $\tau$ corresponds to a sufficiently sub-optimal solution $V(A_\tau)$, which contradicts concentration property
of $A(V(A_\tau))$, which we establish separately using standard techniques, including Gaussian concentration of measure and Kirszbraun's Theorem.

\subsubsection*{Prior results on OGP and algorithmic implications} The concept of OGP originates in the study of spin glass models, specifically
the study of overlap distribution of replicas generated according to some associated Gibbs distribution, such as the one described above. Understanding
the limiting distribution of overlaps is of an utmost importance to spin glass theory and has recieved significant attention \cite{AuffChen13,auffinger2017sk,JagTobPD15,JagTob16boundingRSB}. 
The first connections between to study of the overlaps
and algorithms were made in the context of random constraint satisfaction problems, such as random K-SAT problem and many other similar problems.
These problems exhibit an ''infamous'' gap between the range of parameters for which a satisfying assignment exists vs those for which solutions can be found
in polynomial time. The apparent hardness was linked conjecturally to the clustering (shattering) property of these models which were discovered to
appear roughly in the regime
where known polynomial time algorithms 
fail~\cite{achlioptas2008algorithmic,AchlioptasCojaOghlanRicciTersenghi,mezard2005clustering,
mezard2002analytic,coja2011independent}. The clustering
property says, roughly speaking, that a large  part of the  set of satisfying assignments can be partitioned into clusters separated by 
Hamming distance, which is of the order of the size of the model itself. It is notable that the proof technique used to establish such a clustering property
actually shows something more: the overlaps between pairs of typical (random in some appropriate sense) satisfying assignments lie in a disconnected
union of intervals  $[0,\nu_1]\cup [\nu_2,1]$. Thus the set of solutions is disconnected not only with respect to its ambient metric space, 
but also with respect to its one-dimensional projection onto  the set of possible overlap values. 
The proof technique relies on fairly standard application of the moment
method.  The disconnectivity of overlaps  (that is  the presence of the overlap gaps of the form $(\nu_1,\nu_2)$) 
was later used  as  an obstruction to a class of local
algorithms defined as so-called Factors of IID in~\cite{gamarnik2014limits,rahman2017local,gamarnik2017performance,chen2019suboptimality},
and for random walk
type algorithms in~\cite{coja2017walksat}. 
It is this line of work which the closest in spirit to the present one, as one can think of AMP as a natural definition
for ''local'' algorithms defined on dense instances -- instances not defined on sparse graphs and hypergraphs.

It is important to note that  while OGP implies the clustering property, the converse in general is not true. 
Indeed if the OGP takes place then  one can partition the set of all solutions of interest into those which have
overlap at least $\nu_2$ with some arbitrarily marked solution $\sigma^o$ (thus marked ''Cluster 1''), 
vs solutions with overlap at most $\nu_1$ with $\sigma^o$ (thus marked ''other clusters''), leading to a set of at least two clusters separated by 
a significant distance. On the other hand, one can easily create 
a subset of $\calB_N$ for which the set of all overlaps  spans the entire interval $[0,1]$, though at the same time admits clustering partition.

The OGP was further established for some other models, some involving planted 
signals~\cite{gamarnik2018finding,david2017high,gamarnik2019overlap}. It was shown in~\cite{gamarnik2019overlap} to be an obstruction
to Glauber Dynamics type algorithms by showing that OGP implies the existence of a free energy well, a property which was shown to be a barrier
for Markov chain type algorithms in problems involving planted signals~\cite{arous2018algorithmic}. 
A related notion of free energy barriers associated with these gaps were also shown to be obstructions for local Markov chain type algorithms for
problems of the class considered here in \cite{BAJag17}, where it was also shown
that these free energy barriers occur in a broad class of models including both the $p$-spin and spherical $p$-spin models. 
It can be shown that OGP implies  the existence of a free energy barrier at sufficiently low temperatures.
It is of interest to establish the broadest
class of algorithms for which OGP is a provable barrier.

The remainder of the paper is structured as follows. In the next section we introduce the formalism of the AMP algorithms. 
In Section~\ref{section:OGP} we give the definition of the OGP, state the corresponding conjecture and state our main result.
The validity of OGP for binary solutions is proven in the same section. Some preliminary technical results are established
in Section~\ref{section:preliminary}. Our main technical result is Theorem~\ref{theorem:iteration-bounds} which is stated and proven 
in Section~\ref{section:Iterations-and-bounds}. We note that it is a purely deterministic result showing that the output of the AMP
depends on the values of the tensor $A$ sufficiently continuously. In Section~\ref{eq:concentration-of-V} we establish the concentration
property of the solution $V$ produced by the AMP around its expectation. Our main theorem is proven in Section~\ref{section:OGP-obstructs-AMP}.
In Section~\ref{section:TAP-iteration} we consider TAP solutions and show that a natural class of iterations suggested by TAP fails to find the fixed point of TAP,
modulo the same Conjecture~\ref{conjecture:OGP-for-tensors-Hilbert-cube}, since the iterations are a special case of the class of AMP algorithms
we define. This result is a direct implication of our main result, Theorem~\ref{theorem:Main-result}. It contrasts with the positive result
of Bolthausen~\cite{bolthausen2014iterative}, which establishes that these iterations do converge to the solution of TAP equations in the high-temperature
setting. In Section~\ref{section:AMP-p=2} we verify that the AMP algorithm constructed in~\cite{montanari2018optimization}
also fit the general definition of AMP introduced in this paper. Finally, we conclude in Section~\ref{section:Conclusions} where we state some open questions.

\section{Approximate Message Passing iterations formalism}\label{section:AMP-formalist}
$\langle x,y\rangle=\sum_{1\le i\le N}x_iy_i$ denotes inner product of vectors $x,y\in R^N$. For any  tensor $B\in (\R^N)^{\otimes p}$
$\|B\|_2$ denotes the Frobenius norm $\sqrt{\sum_{1\le i_1,\ldots,i_p\le N}B_{i_1,\ldots,i_p}^2}$, and $\|B\|_{\rm op}$ denotes
the operator norm
\begin{align*}
\max_{u_1,\ldots,u_p}B(u_1,\ldots,u_p)
\end{align*}
where the maximum is over all $u_1,\ldots,u_p\in \R^N, \|u_j\|_2=1, 1\le j\le p$. By Cauchy-Schwartz inequality
$\|B\|_{\rm op}\le \|B\|_2$.

Throughout the paper
$A\in (\R^{N})^{\otimes p}$ denotes $N$-size order $p$ tensor consisting of $\mathcal{N}(0,N^{-(p-1)})$ i.i.d. entries. For any $u_1,\ldots,u_{p-1}\in\R^N$ 
let
\begin{align*}
A(u_1,\ldots,u_p)=\sum_{1\le i_1,\ldots,i_p\le N}A_{i_1,i_2,\ldots,i_{p-1}}u^1_{i_1}\cdots u^{p}_{i_{p}},
\end{align*}
so that for any $u\in \R^N$, $A(u)=A(u,\cdots,u)$ as in (\ref{eq:A(u)}). Here $u_r=(u^r_1,\ldots,u^r_N)$.
For any $u_1,\ldots,u_{p-1}\in\R^N$ we also introduce 
\begin{align}\label{eq:A-dot-u}
y=A(\cdot,u_1,\ldots,u_{p-1})\in \R^N
\end{align} 
defined by
\begin{align*}
y_i=\sum_{1\le i_1,\ldots,i_{p-1}\le N}A_{i,i_1,\ldots,i_{p-1}}u^1_{i_1}\cdots u^{p-1}_{i_{p-1}} \qquad 1\le i\le N.
\end{align*}
Similarly, for any $u\in\R^N$ we write $A(\cdot,u)$ instead of $A(\cdot,u,u,\ldots,u)$ for short. 
We recall the definition of $\eta_N$ from (\ref{eq:GroundState}).
Observe that we may view $A(u)$ as a centered Gaussian process indexed by $\calH_N$, which
has covariance 
$$
\E{A(u)A(v)}= N\Big(\frac{\langle u, v\rangle}{N}\Big)^p.
$$
In particular, $|\E{A(u)A(v)}|\le N$ for any $u,v\in\R^N$ with $\|u\|_2,\|v\|_2\le 1$. The following concentration result is then an immediate consequence of the Borell-TIS inequality, Theorem 2.1.11 of~\cite{adler2009random}.

\begin{theorem}\label{theorem:existence-plus-concentration}
For every $\delta>0$ 
\begin{align*}
\pr\left(|\eta_N-\E{\eta_N}|\ge \delta\right)\le \exp(-(1/4)\delta^2 N),
\end{align*}
for all sufficiently large $N$.
\end{theorem}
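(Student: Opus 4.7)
The plan is to apply the Borell-TIS inequality, as suggested in the paragraph immediately preceding the statement, to the finite centered Gaussian process $\{A(\sigma)/N : \sigma \in \calB_N\}$. Because $\calB_N$ is finite there are no separability or measurability concerns. As a preliminary step I would record the marginal variance: using the covariance identity $\E{A(u)A(v)} = N(\langle u,v\rangle/N)^p$ displayed just above the theorem, together with the fact that $\langle\sigma,\sigma\rangle = N$ for every $\sigma\in\calB_N$, one obtains $\E{(A(\sigma)/N)^2} = 1/N$, uniformly in $\sigma$. Consequently the supremum-variance parameter $\sigma_T^2$ appearing in the Borell-TIS bound equals $1/N$.

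Next I would pass from minimum to supremum via the trivial identity $\eta_N = -\max_{\sigma\in\calB_N}(-A(\sigma)/N)$ and apply the Borell-TIS inequality (Theorem 2.1.11 of \cite{adler2009random}) to the centered Gaussian process $\{-A(\sigma)/N\}_{\sigma\in\calB_N}$. This yields directly
\[
\pr\left(|\eta_N - \E{\eta_N}| \ge \delta\right) \le 2\exp(-N\delta^2/2).
\]

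To match the stated bound I would then absorb the prefactor $2$ into the exponent: whenever $N \ge 4\log 2 / \delta^2$ one has $2\exp(-N\delta^2/2) \le \exp(-N\delta^2/4)$, which gives the claimed inequality for all $N$ sufficiently large. There is no real obstacle here; the only cosmetic points are the reduction from min to max (which is free by symmetry of the Gaussian ensemble, or equivalently by noting that Borell-TIS applies to either tail of a Gaussian Lipschitz functional) and the mild weakening of the exponent from $1/2$ to $1/4$ in order to absorb the universal constant prefactor, both of which are compatible with the ``for all sufficiently large $N$'' qualifier in the statement.
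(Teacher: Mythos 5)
Your proposal is correct and follows essentially the same route as the paper, which simply invokes the Borell--TIS inequality with the variance computation $\E{(A(\sigma)/N)^2}=1/N$ implicit in the covariance formula displayed just before the statement. Your added details (finiteness of $\calB_N$, the min-to-max reduction, and absorbing the factor $2$ into the exponent for $N\ge 4\log 2/\delta^2$) are exactly the bookkeeping the paper leaves to the reader.
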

A major consequence of the development in spin glass theory is the existence of the limit
\begin{align}\label{eq:gamma-limit}
\lim_{N\to\infty}\E{\eta_N}=\eta^*<0,
\end{align}
which by Theorem~\ref{theorem:existence-plus-concentration} also implies that the limit $\eta_N\to\eta^*$ holds w.h.p. as $N\to\infty$.

We now introduce a set of assumptions which are used to define a class of AMP algorithms. Fix a positive integer $T$ and 
an $M>0$.
Consider two sequences of functions $f_t:[-M,M]^t\to\R$ and 
$F_t:\R\times[-M,M]^{t} \to\R, 1\le t\le T$.  

\begin{Assumption}\label{assumptions:F}
$f_t(0)=0$. Furthermore,
functions $f_t,F_t$ are Lipschitz continuous on their respective domains. 
More precisely, there exists $\zeta\in\R_+$ such that for all $1\le t\le T$,
\begin{align}\label{eq:f-lipschitz}
\sup_{u,v\in[-M,M]^t}|f_t(u)-f_t(v)|&\le \zeta \|u-v\|_2,\\
\sup_{u,v\in\R\times[-M,M]^t} |F_t(u)-F_t(v)|&\le \zeta \|u-v\|_2.\label{eq:F-lipschitz}
\end{align}
\end{Assumption}
The assumption (\ref{eq:F-lipschitz}) says that the function $F_t$ is Lipschitz on an infinite rectangle $\R\times [-M,M]^t$
This will be required due to the special role played by the first variable  of $F_t$.

Fix a positive constant $M>1$. Let $x_M=\max(-M,\min(x,M))$ denote an $M$-truncation for any $x\in \R$. When $x$ is a vector,
$x_M$ is assumed to be applied coordinate-wise. We now define the iterations forming the basis of AMP. 
Fix $U^0\in [-M,M]^N$ and define the sequence $U^t\in\R^N, 1\le t\le T$ as follows
\begin{align}\label{eq:Iteration-U^t}
U^t=\left[F_t(A(\cdot,f_t(U^0,\ldots,U^{t-1})),U^0,\ldots,U^{t-1})\right]_M \in [-M,M]^N,
\end{align}
where $F_t,f_t$ and $M$ are applied component-wise. In other words, in step $t$, first a vector $f_t(U^0,\ldots,U^{t-1})\in\R^N$
is formed by applying $f_t$ coordinate-wise (recall that the domain of $f_t$ is $\R^t$). Then this vector is used to define
vector $A(\cdot,f_t(U^0,\ldots,U^{t-1}))$ via (\ref{eq:A(u)marginal}). This vector is concatinated with prior vectors $U^0,\ldots,U^{t-1}$
to form an $N\times (t+1)$ matrix $\left(A(\cdot,f_t(U^0,\ldots,U^{t-1})),U^0,\ldots,U^{t-1}\right)\in\R^{N\times (t+1)}$. Then
function $F_t$ is applied coordinate-wise. Finally the $M$-truncation
is applied to each of the $N$ coordinates of the  vector thus obtained, resulting in $U^t$.

We now describe an algorithm which uses AMP to generate a solution in $\calB_N$. For this purposes we assume that the algorithm designer
has access to some (computable) projection function $\Pi_N:\calH_N\to \calB_N$.  We discuss this further below.

\begin{algorithm}[AMP Algorithm]\label{alg:AMP}

The  algorithm is parametrized by $U^0,M,T,(f_t, 1\le t\le T),(F_t, 1\le t\le T),\Pi_N$.

\begin{enumerate}
\item[]
Input  $A\in (\R^N)^{\otimes p}$.

\item[{\rm \textbf{Step 1}}]
Compute $U^T$ using (\ref{eq:Iteration-U^t}). 

\item[{\rm \textbf{Step 2}}]
Project $U^T$ to $\calH_N$ by applying
transformation $x\to [x]_1=\max(\min(x,1),-1)$, coordinate-wise. Denote the resulting vector by $V\in \calH_N$.

\item[{\rm \textbf{Step 3}}]
Output $\sigma=\Pi(V)\in \calB_N$.

\end{enumerate}

\end{algorithm}

In some sense the details of the projection $\Pi_N$ are immaterial to us since our negative result will be concerned with the quality 
of the solution $V$ itself and not its projection. Nevertheless, for completeness we describe now the projection used in (\cite{montanari2018optimization}),
which we denote by $\Pi_N^{\sign}$. 
The projection was defined only for $p=2$ which was the case of interest. But it is straightforward to extend the idea to the case of general $p$.
Set $z^{(0)}=V$.
For $j=1,\ldots,N$, construct $z^{(j)}$  by making
all coordinates $\ell\ne j$ of $z^{(j)}$ to be the same as of $z^{(j-1)}$, and setting the $j$-th coordinate of $z^{(j)}$ to 
be the sign opposite of 
\begin{align}\label{eq:linear-multiplier-V_j}
\sum_{j\ne i_1\ne i_2\ne\cdots\ne i_{p-1}}A_{j,i_1,i_2,\ldots,i_{p-1}}z^{(j-1)}_{i_1,\ldots,i_{p-1}}.
\end{align}
In particular, the first $j$ coordinates of $z^{(j)}$ are $\pm 1$, but the remaining coordinates are real valued in general.
Set $\Pi_N^{\sign}(V)=z^{(N)}$.

Let us comment on the meaning and motivation behind the steps of the AMP algorithm above and also the motivation behind the 
projection $\Pi_N$ described above and used in (\cite{montanari2018optimization}). The idea is that when the AMP algorithm succeeds,
the vector $V$, while not being an element of the binary cube $\calB_N$, should be nearly optimal in the sense that
\begin{align*}
A(V)\approx \inf_{w\in \calB_N}A(w),
\end{align*}
and should not be too far from $\calH_N$, so that the projecting $U^T$ to $V\in \calH_N$ does not change the objective value significantly.
That is $A(V)\approx A(U^T)$. Next one observes that $\Pi_N^{\sign}$ effectively rounds $V$ to a vector $z^{(N)}$ in $\calB_N$ 
in such a way that the objective value only decreases
asymptotically. This is verified by observing that for each coordinate $j$, the dependence of $A(V)$ on variable $V_j$ is linear in $V_j$, except for
terms $A_{i_1,\ldots,i_p}$ with repeating coordinates (i.e. such that $i_\ell=i_r$ for some $1\le \ell\ne r\le p$). Since $V\in \calH_N$ and
thus $|V_j|\le 1$,  the linearity allows to round $V_j$ to $-1$ or $1$ while only decreasing the objective value. This is done trivially
by setting $V_j$ to be the sign opposite of the one of the multiplier of $V_j$, which is (\ref{eq:linear-multiplier-V_j}). This is done
iteratively over all $N$ coordinates. The terms corresponding to repeating coordinates are easily shown to be of lower order of magnitude
than the objective value. As a result
one obtains a vector $z=z^{(N)}\in \calB_N$ satisfying
\begin{align*}
A(z)\lesssim A(V)\approx\inf_{\sigma\in \calB_N}A(\sigma).
\end{align*}
But since $z$ belongs to the solution space itself (the binary cube $\calB_N$), it must be the case that in fact
\begin{align*}
A(z)\approx A(V)\approx \inf_{\sigma\in \calB_N}A(\sigma),
\end{align*}
and thus the success of AMP is validated.
Importantly, the near optimality of $z$ is argued from the near optimality of $V$ itself.
This discussion is of key essence to the main result of our work, which is stated in the next section.

\section{The OGP conjecture and the main result}\label{section:OGP}
Consider an arbitrary set $\mathcal{A}$ of tensors $A\in (\R^N)^{\otimes p}$.

\begin{Defi}
The set $\mathcal{A}$ satisfies the Overlap Gap Property (OGP) with domain $\mathcal{S}_N\subset \R^N$, 
and parameters $\mu>0, 0<\nu_1<\nu_2<1$ if for every pair
$A_j\in \mathcal{A}, j=1,2$ and every $u_j, j=1,2$ satisfying 
\begin{align*}
{1\over N}A_j(u_j)\le {1\over N}\inf_{w\in \mathcal{S}_N}A_j(w)+\mu, \qquad j=1,2,
\end{align*}
it holds
\begin{align}\label{eq:OGP}
{|\langle u_1,u_2\rangle|\over \|u_1\|_2\|u_2\|_2} \in [0,\nu_1]\cup [\nu_2,1].
\end{align}
\end{Defi}
Namely, every pair of nearly ($\mu$-close) optimal solutions with respect to any two members of $\mathcal{A}$ cannot have normalized inner product
in the interval $(\nu_1,\nu_2)$.

Consider two independent random tensors $A$ and $\hat A$ in $(\R^N)^{\otimes p}$ both with i.i.d. $\mathcal{N}\left(0,1/N^{p-1}\right)$ entries.
Introduce the interpolated set of tensors $A_\tau\triangleq\sqrt{1-\tau}A+\sqrt{\tau}\hat A$ with $\tau$ varying in $[0,1]$. 
Note that for each fixed $\tau$, $A_\tau$ is distributed as $A$. Our main conjecture regarding the OGP concerns the set
$\mathcal{A}\triangleq (A_\tau, 0\le \tau\le 1)$.

\begin{conjecture}\label{conjecture:OGP-for-tensors-Hilbert-cube}
For every even $p\ge 4$ here exists $\mu>0, 0<\nu_1<\nu_2<1$ such that $\mathcal{A}$ 
described above satisfies the OGP with domain $\mathcal{S}_N=\calH_N$ and  parameters $\mu,\nu_1,\nu_2$, with probability at least
$1-\exp(-cN)$, for some $c>0$ for all sufficiently large $N$. Furthermore, for every $\delta>0$ and every $v_1,v_2\in \calH_N$
satisfying $A(v_1)/N\le (1-\delta)\E{\eta_N}, \hat A(v_2)/N\le (1-\delta)\E{\eta_N}$, it holds $|\langle v_1,v_2\rangle|\le \delta N$
with probability at least $1-\exp(-cN)$ for some $c>0$ and all large $N$.
\end{conjecture}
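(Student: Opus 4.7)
Both halves of the conjecture should be reducible to already-available binary-cube results: the OGP on $\calB_N$ for the interpolating family $\calA$ (Theorem~\ref{theorem:OGP-for-tensors}) and the binary-cube chaos theorem of~\cite{chen2018energy}. The single missing input is a quantitative \emph{near-binariness} lemma: for every $\tau\in[0,1]$, every $v\in\calH_N$ with $A_\tau(v)/N\le\eta^*+\mu$ is within normalized $\ell_2$-distance $\epsilon(\mu)$ of some $\sigma\in\calB_N$ with $A_\tau(\sigma)/N\le\eta^*+\mu'(\mu)$, where $\epsilon(\mu),\mu'(\mu)\to 0$ as $\mu\to 0$, with the statement holding uniformly in $\tau$ and with probability at least $1-e^{-cN}$.

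\textbf{Reduction, assuming near-binariness.} Suppose $A_{\tau_j}(v_j)/N\le \eta^*+\mu$ for $j=1,2$, and let $\sigma_1,\sigma_2\in\calB_N$ be the associated near-binary rounds, with $\|v_j-\sigma_j\|_2^2/N\le\epsilon$. Writing $\langle v_1,v_2\rangle-\langle\sigma_1,\sigma_2\rangle=\langle v_1-\sigma_1,v_2\rangle+\langle\sigma_1,v_2-\sigma_2\rangle$ and using $\|v_j\|_2,\|\sigma_j\|_2\le\sqrt{N}$, Cauchy--Schwarz gives
\[
\Bigl|\tfrac{1}{N}\langle v_1,v_2\rangle-\tfrac{1}{N}\langle\sigma_1,\sigma_2\rangle\Bigr|\le 2\sqrt{\epsilon}.
\]
Theorem~\ref{theorem:OGP-for-tensors} places $N^{-1}|\langle\sigma_1,\sigma_2\rangle|$ in $[0,\nu_1]\cup[\nu_2,1]$, so $N^{-1}|\langle v_1,v_2\rangle|$ lies outside the slightly shrunken interval $(\nu_1+2\sqrt{\epsilon},\nu_2-2\sqrt{\epsilon})$. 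Choosing $\mu$ (and hence $\epsilon$) small recovers a genuine overlap gap. The second clause of Conjecture~\ref{conjecture:OGP-for-tensors-Hilbert-cube} is obtained identically from the binary chaos result: near-binary rounds $\sigma_1,\sigma_2$ of $v_1,v_2$ are nearly orthogonal under independent $A,\hat A$, and the same $2\sqrt{\epsilon}$ estimate transfers orthogonality to $v_1,v_2$.

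\textbf{Attacking near-binariness.} I see two natural routes. The first is a \emph{direct rounding} bound: set $\sigma=\sign(v)$ and expand
\[
A_\tau(\sigma)-A_\tau(v)=\sum_{k=1}^{p}\binom{p}{k}\int_0^1 s^{k-1}A_\tau\bigl(\underbrace{\sigma-v,\dots,\sigma-v}_{k},\underbrace{v,\dots,v}_{p-k}\bigr)\,ds,
\]
and control each term by $\|A_\tau\|_{\rm op}\cdot\|\sigma-v\|_2^{k}\cdot\|v\|_2^{p-k}$, with $\|\sigma-v\|_2^2=\sum_i(1-|v_i|)^2$. The second is a \emph{Kac--Rice / second moment} approach: bound the expected exponential rate of $\mu$-near-optimal critical points of $A_\tau|_{\calH_N}$ having a positive fraction $\kappa$ of coordinates strictly inside $(-1,1)$, and show this rate is negative. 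Combining with a $\tau$-net on $[0,1]$ and the $\tau$-Lipschitz continuity used in Theorem~\ref{theorem:iteration-bounds} would promote the bound to uniformity in $\tau$, matching the exponential-in-$N$ probability required by the conjecture.

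\textbf{Main obstacle.} The serious difficulty is the near-binariness lemma itself. The rounding expansion alone is too crude: the $k=p$ term is genuinely of order $N$ unless $v$ is already almost-binary, so this route must be paired with an optimality argument showing no ``partial sign-flip'' of the fractional coordinates lowers the energy, which is essentially circular. The Kac--Rice route looks more promising but demands a boundary-aware complexity computation on $[-1,1]^N$ rather than on the sphere, and uniformly across the interpolating family $A_\tau$; the known $p$-spin complexity results are mostly spherical and for a single tensor, so the requisite technology does not seem to be off the shelf. This is precisely the gap the authors identify, and it is the reason the statement is stated as a conjecture rather than a theorem.
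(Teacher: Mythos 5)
The statement you are trying to prove is stated in the paper as a \emph{conjecture}: the authors give no proof of it, and they say explicitly (right after Theorem~\ref{theorem:OGP-for-tensors}) that it would follow from the binary-cube OGP if one could show that every near-optimal point of $\calH_N$ is close to a point of $\calB_N$ — a statement they cannot prove and therefore isolate as a separate open problem (Conjecture~\ref{conjecture:near-binary}). Your proposal reconstructs exactly this reduction: your ``near-binariness lemma'' is, up to quantifiers, the paper's Conjecture~\ref{conjecture:near-binary} (strengthened to hold uniformly over the interpolating family and to keep the rounded point near-optimal on $\calB_N$), and your Cauchy--Schwarz transfer of the overlap gap from $\sigma_1,\sigma_2$ to $v_1,v_2$ is the argument the authors have in mind. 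So the approach is the intended one, but the proposal is not a proof, and you correctly diagnose why: the sign-rounding expansion is circular (the $k=p$ term is order $N$ unless $v$ is already nearly binary), and a boundary-aware Kac--Rice/complexity computation on $[-1,1]^N$, uniform in $\tau$, is not available in the literature. That missing lemma is the entire content of the open problem; nothing in the paper closes it.

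Two smaller points to be careful about if you pursue the reduction. First, the OGP on $\calH_N$ is stated with the normalization $\|u_1\|_2\|u_2\|_2$ rather than $N$, so after transferring $N^{-1}\langle v_1,v_2\rangle$ you still need a lower bound on $\|v_j\|_2$ of order $\sqrt{N}$ to convert between the two normalizations; this is available (e.g.\ via Lemma~\ref{lemma:Norm-V-large}, since $A_{\tau_j}(v_j)/N\le \eta^*+\mu<0$, or directly from $\|v_j-\sigma_j\|_2\le\sqrt{\epsilon N}$), but it should be said. Second, your reduction needs the rounded $\sigma_j$ to be $\mu'$-optimal for the \emph{same} tensor $A_{\tau_j}$ of the interpolating family, with $\mu'$ below the OGP threshold of Theorem~\ref{theorem:OGP-for-tensors}; since that theorem's $\mu$ is fixed by the binary-cube argument, you must choose the Hilbert-cube tolerance $\mu$ small enough that $\mu'(\mu)$ falls below it, which is fine but constrains the order of quantifiers in your near-binariness statement.
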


Our  main result stated below assumes the validity of this conjecture. To state this result, let us introduce 
the following. Let $\calM_1([-M,M]^N)$ denote the space
of probability measures on $[-M,M]^N$. Let $V(A,T,U^0)$ denote the output of the first two steps
of Algorithm \ref{alg:AMP} after $T$ steps with coefficient matrix $A$ and initial data $U^0$,
where the entires of $A\in (\R^N)^{\otimes p}$ are i.i.d. $\mathcal{N}(0,N^{-(p-1)})$. Then the following holds.

\begin{theorem}\label{theorem:Main-result}
Let $p\geq 4$ be even. Let $M\geq 1$ and $\zeta>0$. Assume that $(f_t),(F_t)$ satisfy Assumption~\ref{assumptions:F} with Lipschitz constant $\zeta$
and that Conjecture~\ref{conjecture:OGP-for-tensors-Hilbert-cube} holds. Then there exists $\bar\mu>0$
and $c>0$, such that for $N$ sufficiently large and any $\nu \in \calM_1([-M,M]^N)$, if $U^0\sim \nu$,
then $V=V(A,T,U^0)$ satisfies
\[
\pr\left({A(V)\over N}\ge {\min_{\sigma\in \calB_N} A(\sigma) \over N}+\bar\mu\right) \geq 1-\exp(-cN).
\]
\end{theorem}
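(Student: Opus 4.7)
The plan is a proof by contradiction via the interpolation $A_\tau=\sqrt{1-\tau}A+\sqrt{\tau}\hat A$, $\tau\in[0,1]$, where $\hat A$ is an independent copy of $A$. Suppose for contradiction that the event $\{A(V(A))/N \le \eta^*+\bar\mu\}$ has probability at least $\exp(-cN/2)$ for every constant $c>0$, where $\bar\mu$ is a small constant to be fixed depending on $\mu,\nu_1,\nu_2,\zeta,T,M$ only. Since $A_\tau \stackrel{d}{=} A$ for each fixed $\tau$, the same failure probability applies to $A_\tau(V(A_\tau))/N$, and combined with the concentration bound for the AMP output proved in Section~\ref{eq:concentration-of-V}, a union bound over a polynomial-in-$N$ grid $\tau_k=k/K$, $0\le k\le K$, yields on an event of probability $1-\exp(-c'N)$ that $A_{\tau_k}(V(A_{\tau_k}))/N \le \eta^*+2\bar\mu$ for every gridpoint simultaneously.

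The heart of the argument is to combine the OGP with the deterministic Lipschitz-in-$\tau$ estimate of Theorem~\ref{theorem:iteration-bounds}. Since coordinatewise truncation $x\mapsto [x]_1$ is $1$-Lipschitz, that estimate upgrades to a bound of the form $N^{-1/2}\|V(A_\tau)-V(A_{\tau'})\|_2 \le L\sqrt{|\tau-\tau'|}$ on the high-probability event that the operator norms of $A$ and $\hat A$ are $O(1)$, with $L$ depending only on $\zeta,T,M$. On the event that $\|V(A_0)\|_2\ge \rho\sqrt N$ for some $\rho>0$ (if not, $V(A_0)$ is essentially zero and so trivially sub-optimal since $\eta^*<0$), the normalized overlap
\[
q(\tau)\triangleq \frac{|\langle V(A_\tau),V(A_0)\rangle|}{\|V(A_\tau)\|_2\|V(A_0)\|_2}
\]
is then uniformly continuous in $\tau$ with modulus independent of $N$, so taking $K$ large enough (yet polynomial in $N$) lets us interpolate between arbitrary $\tau$ and the grid while changing $q$ by less than $(\nu_2-\nu_1)/4$.

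At the endpoints $q(0)=1$ trivially, while at $\tau=1$ the tensors $A_0=A$ and $A_1=\hat A$ are independent and both $V(A_0),V(\hat A)$ are near optima in $\calH_N$ on the assumed failure event, so the second half of Conjecture~\ref{conjecture:OGP-for-tensors-Hilbert-cube} gives $|\langle V(A_0),V(A_1)\rangle|\le \delta N$ for $\delta$ as small as desired, forcing $q(1)<\nu_1$ via the lower bound on the norms. The intermediate value theorem, applied on the grid, then produces some gridpoint $\tau^*$ with $q(\tau^*)\in(\nu_1,\nu_2)$. Choosing $\bar\mu<\mu/2$, the pair $(V(A_0),V(A_{\tau^*}))\in\calH_N\times\calH_N$ consists of two $\mu$-near optima for the pair $A_0,A_{\tau^*}\in\mathcal{A}$ whose normalized overlap lies in the forbidden interval $(\nu_1,\nu_2)$, contradicting the first half of Conjecture~\ref{conjecture:OGP-for-tensors-Hilbert-cube}. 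The main obstacle is the parameter bookkeeping: choosing $K$, $\bar\mu$, $\delta$, $\rho$, and the various tolerances so that every bad event — concentration failure of $A_\tau(V(A_\tau))$ along the grid, norm lower bounds, failure of either half of the OGP conjecture, and operator-norm control needed for Theorem~\ref{theorem:iteration-bounds} — is individually $\exp(-cN)$-small and survives the union bound. Conditional on Theorem~\ref{theorem:iteration-bounds}, which does the substantive analytic work, the rest is a compactness/IVT argument.
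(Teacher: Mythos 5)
Your proposal is correct and follows essentially the same route as the paper's proof: assume near-optimality of $V$ happens with non-negligible probability, use concentration of $A(V)$ about its conditional mean given $U^0$ (Theorem~\ref{theorem:solution-concentrated}) plus equidistribution of $A_\tau$ to make all points of a polynomially fine $\tau$-grid near-optimal simultaneously, control overlap increments along the grid via Theorem~\ref{theorem:iteration-bounds} and the norm lower bound of Lemma~\ref{lemma:Norm-V-large} (which you should invoke at every gridpoint, not only $\tau=0$), use chaos at $\tau=1$ and a discrete intermediate-value argument to force an overlap in $(\nu_1,\nu_2)$, and contradict Conjecture~\ref{conjecture:OGP-for-tensors-Hilbert-cube}. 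One small caveat: your claimed $N$-independent modulus $N^{-1/2}\|V(A_\tau)-V(A_{\tau'})\|_2\le L\sqrt{|\tau-\tau'|}$ does not follow from operator norms being merely $O(1)$ — it requires the sharp scaling $\|A\|_{\rm op}\lesssim N^{1-p/2}$ of Lemma~\ref{lemma:op-norm-bound} (the paper instead uses the cruder Frobenius bound and compensates with a grid of spacing $N^{-\alpha}$, $\alpha>p-1$) — but since you allow a polynomially fine grid anyway, the argument still goes through.
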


Thus we argue the failure of the AMP to find a vector $V\in \calH_N$ which is a near optimizer of $A$. As discussed  earlier, 
this is a negative result regarding the performance of AMP, since 
finding such near optimal $V$ is a key step towards finding a near optimal member $z$ of the binary cube $\calB_N$. Ideally, one would establish
that the vector $z$ obtained from $V$ via any projection scheme, such as the one  
described above is also $\mu$-away from optimality. Unfortunately, our proof technique stops short
of that due to the potential sensitivity of the sign function used on obtaining $z$ to perturbation of $A$, thus 
potentially violating stability used crucially in the proof of our main result. We leave this as an interesting open question.

A partial support to the validity of Conjecture~\ref{conjecture:OGP-for-tensors-Hilbert-cube} 
above is its validity for the domain $\mathcal{S}_N=\calB_N$ as we now establish.

\begin{theorem}\label{theorem:OGP-for-tensors}
For every even $p\ge 4$ here exists $\mu>0, 0<\nu_1<\nu_2<1$ such that $\mathcal{A}$ 
described above satisfies the OGP with domain $\mathcal{S}=\calB_N$ and  parameters $\mu,\nu_1,\nu_2$, with probability at least
$1-\exp(-cN)$, for some $c>0$ for all sufficiently large $N$. Furthermore, for every $\delta>0$ and every $\sigma_1,\sigma_2\in \calB_N$
satisfying $A(\sigma_1)/N\le (1-\delta)\E{\eta_N}, \hat A(\sigma_2)/N\le (1-\delta)\E{\eta_N}$, it holds $|\langle \sigma_1,\sigma_2\rangle|\le\delta N$
with probability at least $1-\exp(-cN)$ for some $c>0$ and all large $N$.
\end{theorem}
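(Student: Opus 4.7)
The plan is to bootstrap two known ingredients: the single-instance OGP for even $p\ge 4$ from~\cite[Theorem~3]{chen2019suboptimality}, and the chaos-in-disorder result for the $p$-spin model from~\cite{chen2018energy}. I lift them to the one-parameter interpolated family $\mathcal{A}=\{A_\tau\}_{\tau\in[0,1]}$ via an $\epsilon$-net argument in $\tau$ combined with uniform Lipschitz continuity of $\tau\mapsto A_\tau(\sigma)/N$ on $\calB_N$.

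The chaos assertion (second half of the theorem) follows essentially from~\cite{chen2018energy}, possibly promoted from a typical-pair statement to an all-pairs statement via a first-moment bound: for each $(\sigma_1,\sigma_2)\in\calB_N^2$ with $|\langle\sigma_1,\sigma_2\rangle|>\delta N$, the event that the independent Gaussians $A(\sigma_1)$ and $\hat A(\sigma_2)$ both lie below $(1-\delta)\E{\eta_N}\cdot N$ has exponentially small probability that beats the $2^{2N}$ union bound, using the strict concavity of the annealed free energy in the overlap for even $p$.

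For the main OGP assertion, fix small parameters $\epsilon,\epsilon_0>0$ to be tuned and a net $0=\tau_0<\tau_1<\cdots<\tau_K=1$ with spacing $\epsilon$. The argument then runs in three steps. First, apply~\cite[Theorem~3]{chen2019suboptimality} to each marginal $A_{\tau_k}$ (all distributed as $A$), obtaining common parameters $(\mu,\nu_1,\nu_2)$ for all $k$ after a union bound costing only a factor of $K$. Second, apply the chaos statement to each pair $(\tau_i,\tau_j)$ with $|\tau_i-\tau_j|\ge\epsilon_0$; although $A_{\tau_i}$ and $A_{\tau_j}$ are coupled through $(A,\hat A)$, their joint law is Gaussian with entrywise correlation $\sqrt{(1-\tau_i)(1-\tau_j)}+\sqrt{\tau_i\tau_j}$ strictly below $1$ in this regime, and the argument of~\cite{chen2018energy} extends. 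Third, use
\begin{align*}
A_\tau-A_{\tau'}=(\sqrt{1-\tau}-\sqrt{1-\tau'})\,A+(\sqrt{\tau}-\sqrt{\tau'})\,\hat A
\end{align*}
together with the standard bound $\|A\|_{\rm op},\|\hat A\|_{\rm op}=O(N^{(2-p)/2})$ w.h.p.\ to obtain uniform Lipschitz continuity
\begin{align*}
\sup_{\sigma\in\calB_N}\frac{|A_\tau(\sigma)-A_{\tau'}(\sigma)|}{N}\le C\sqrt{|\tau-\tau'|},
\end{align*}
again with exponentially small failure probability.

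Putting these together: for arbitrary $\tau_1,\tau_2\in[0,1]$ with $\sigma_j$ near-optimal for $A_{\tau_j}$, pick the closest net points $\tau_{k_j}$ to $\tau_j$; Lipschitz continuity in $\tau$ makes $\sigma_j$ also $(\mu+C\sqrt{\epsilon})$-near-optimal for $A_{\tau_{k_j}}$ once $\epsilon$ is small. If $|\tau_{k_1}-\tau_{k_2}|<\epsilon_0$, one more Lipschitz step makes both $\sigma_1,\sigma_2$ simultaneously near-optimal for the same $A_{\tau_{k_1}}$, and the single-instance OGP forces $|\langle\sigma_1,\sigma_2\rangle|/N\in[0,\nu_1]\cup[\nu_2,1]$; otherwise the chaos step forces $|\langle\sigma_1,\sigma_2\rangle|/N\in[0,\nu_1]$. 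The main obstacle is extracting the chaos statement with uniformity over all near-optimal pairs rather than just the typical pair, and choosing the constants $(\mu,\nu_1,\nu_2,\epsilon,\epsilon_0)$ in a mutually compatible way: the natural order is to first fix $(\mu,\nu_1,\nu_2)$ from~\cite{chen2019suboptimality}, then choose $\epsilon_0$ so that the chaos bound is strictly below $\nu_1$, and finally take $\epsilon\ll\min(\mu,\epsilon_0)$.
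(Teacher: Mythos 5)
Your proposal follows essentially the same route as the paper's proof: combine the single-instance OGP of \cite{chen2019suboptimality} with the chaos theorem of \cite{chen2018energy}, lift them to the interpolated family by a union bound over a finite net in $\tau$ together with a uniform-in-$\sigma$ continuity estimate for $\tau\mapsto A_\tau(\sigma)$ (the paper gets this from concentration of $\max_\sigma|A(\sigma)|$ rather than the operator norm, an immaterial difference), and do the same parameter bookkeeping with $\bar\mu=\min(\tilde\mu,\mu)$ and the original $\nu_1,\nu_2$. One caveat: the chaos result of \cite{chen2018energy} is already an all-pairs statement about near-ground states (the paper quotes it in exactly this form and applies it with $\tau=1$ for the second claim), so the ``first-moment promotion'' you hedge with is unnecessary --- and it would in fact fail, since $A$ and $\hat A$ are independent, so restricting to pairs of overlap at least $\delta N$ does not improve the probability term and the $2^{2N}$ entropy cannot be beaten by a naive union bound.
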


\begin{proof}
We note that in the case $\mathcal{S}_N=\calB_N$, since $\|\sigma\|_2=N$ for each $\sigma\in \calB_N$, 
the requirement (\ref{eq:OGP}) in definition of OGP simplifies to 
\begin{align*}
{|\langle \sigma_1,\sigma_2\rangle|\over N} \in [0,\nu_1]\cup [\nu_2,1].
\end{align*}
It was established in~\cite{chen2019suboptimality}, Theorem~3 that the OGP holds for a single instance of a tensor $A$, i.e. $\mathcal{A}=\{A\}$,
with probability at least $1-\exp(-cN)$ for some $c>0$ and all large $N$.
At the same time the following chaos property was established in~\cite{chen2018energy}:

\begin{theorem}[\cite{chen2018energy}, Theorem~2]\label{theorem:chaos}
For every $\epsilon>0$ and $\tau\in (0,1)$ there exists ${C,}\tilde\mu>0$ such that with probability $1-\exp(-C N)/C$,
for every $\sigma_1,\sigma_2\in \calB_N$ satisfying 
$A(\sigma_1)/N\le \E{\eta_N}+\tilde\mu, A_{\tau}(\sigma_2)/N\le \E{\eta_N}+\tilde\mu$ it holds $|\langle \sigma_1,\sigma_2\rangle|\le \epsilon N$.
\end{theorem}

We now combine these two results. We first claim that it suffices to establish OGP for a discrete finite subsets. Fix $\delta>0$
such that $1/\delta$ is an integer 
and consider $A_\tau$ for $\tau=0,\delta,2\delta,\ldots,\delta(1/\delta)$. We assume OGP holds for this set for some $\mu,\nu_1,\nu_2$ for every
sufficiently small such $\delta$. Now 
for any $\sigma\in \calB_N$
\begin{align*}
A_\tau(\sigma)-A(\sigma)=(\sqrt{1-\tau}-1)A(\sigma)+\sqrt{\tau}A(\sigma).
\end{align*}
In light of concentration bound of Theorem~\ref{theorem:existence-plus-concentration}, for any $\epsilon>0$ we can find small enough $\delta$ so that 
\begin{align*}
\max_{0\le k\le (1/\delta)-1}\sup_{0\le\tau\le\delta}\max_{\sigma\in \calB_N}|A_{k\delta+\tau}(\sigma)-A_{k\delta}(\sigma)|\le \epsilon N
\end{align*}
with probability at least $1-\exp(-cN)$, for some $c>0$
and large $N$. This means that modulo exponentially small probability, every $\sigma$ satisfying $A_{k\delta+\tau}(\sigma)/N\le \E{\eta_N}+\epsilon$
also satisfies $A_{k\delta}(\sigma)/N\le \E{\eta_N}+2\epsilon$. Thus if $\epsilon<\mu-2\epsilon$ then the set
$(A_\tau,\tau\in [0,1])$ satisfies OGP with $\hat\mu=\mu-2\epsilon>0$ and the same $\nu_1,\nu_2$, 
provided that the discrete set $(A_{k\delta}, 0\le k\le 1/\delta)$ satisfies OGP with $\mu,\nu_1,\nu_2$.
Thus we now prove OGP for this discrete set.

Let $\mu,\nu_1,\nu_2$ be OGP parameters for a single instance $A$. By the union bounds over $k=0,1,\ldots,1/\delta$,
the OGP holds for each $A_{k\delta}$ modulo exponentially small probability.
Fix $\delta>0$. 
Applying Theorem~\ref{theorem:chaos}, we find $\tilde\mu$ so that the theorem claim holds for $\epsilon=\nu_1$ and $\tau=\delta$. 
By union bounds this also holds for all pairs $A_{k_1\delta},A_{k_2\delta}, k_1\ne k_2$ modulo exponentially small probability.
Then OGP holds for $\bar\mu\triangleq \min(\tilde\mu,\mu),\nu_1,\nu_2$ by considering separately the cases $k_1=k_2$ and $k_1\ne k_2$, where in the latter
case for every $\sigma_j,j=1,2$ satisfying $A_{k_j\delta}(\sigma_j)/N\le \E{\eta_N}+\bar\mu, j=1,2,$ we simply have 
$|\langle\sigma_1,\sigma_2\rangle|\le \nu_1 N$. 

The second part of the theorem follows immediately from the chaos property of Theorem~\ref{theorem:chaos} in the special case $\tau=1$.
\end{proof}

Conjecture~\ref{conjecture:OGP-for-tensors-Hilbert-cube} would follow from Theorem~\ref{theorem:OGP-for-tensors} if we could establish
that every nearly optimal solution in $\calH_N$ is actually close to a point in $\calB_N$. This is quite plausible as one does not expect
nearly optimal solutions to exist ''deep'' inside the Hilbert cube $\calH_N$.
Unfortunately, we are not able to show this and thus
state it as an interesting open problem.

\begin{conjecture}\label{conjecture:near-binary}
Suppose the entries of $A\in (\R^N)^{\otimes p}$ are generated i.i.d. according to $\mathcal{N}(0,N^{-{(p-1)}})$. For every 
$\epsilon>0$ there exists $\delta>0$ such that with probability at least $1-\exp(-cN)$ for some $c>0$ and large enough $N$,
every $u\in \calH_N$ satisfying $A(u)/N\le (1-\delta)\E{\eta_N}$ also satisfies $\min_{v\in \calB_N}\|u-v\|_2\le \epsilon \sqrt{N}$.
\end{conjecture}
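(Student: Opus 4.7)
The plan is to reduce Conjecture~\ref{conjecture:near-binary} to a uniform anti-concentration estimate for cavity fields at near-optimal configurations, and then attempt to derive that estimate from the OGP of Theorem~\ref{theorem:OGP-for-tensors}. For every $u\in\calH_N$ and index $j$, the one-variable section $t\mapsto A(u_1,\ldots,u_{j-1},t,u_{j+1},\ldots,u_N)$ is a polynomial of degree $p$ in $t$ whose coefficient of $t^k$ has standard deviation of order $N^{(1-k)/2}$ for $u\in\calH_N$. Summed over $j$, the total nonlinear contribution to $A(u)$ is therefore at most $O(\sqrt{N\log N})=o(N)$, outside an event of exponentially small probability by Borell--TIS. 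Writing $h_j(u)$ for the coefficient of $t$ in that polynomial (the cavity field at site $j$), apply the iterative rounding $\Pi_N^{\sign}$ starting from $u$ to obtain intermediate iterates $u^{(j-1)}$ and a terminal binary vector $\sigma\in\calB_N$. Coordinate-wise optimality then yields
\begin{align*}
A(\sigma)\le A(u)-\sum_{j=1}^{N}|h_j(u^{(j-1)})|\bigl(1+\mathrm{sign}(h_j(u^{(j-1)}))\,u_j\bigr)+o(N).
\end{align*}

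Combining the hypothesis $A(u)/N\le (1-\delta)\E{\eta_N}$ with the lower bound $A(\sigma)/N\ge \eta_N\to \eta^*<0$ from Theorem~\ref{theorem:existence-plus-concentration} gives
\begin{align*}
\sum_{j=1}^{N}|h_j(u^{(j-1)})|\bigl(1+\mathrm{sign}(h_j(u^{(j-1)}))\,u_j\bigr)\le \delta\,|\eta^*|\,N+o(N).
\end{align*}
Every summand is nonnegative, so fewer than $(\delta|\eta^*|/\epsilon)\,N$ indices can have their summand exceeding $\epsilon$; for each remaining index the AM--GM inequality forces either $|h_j(u^{(j-1)})|<\sqrt{\epsilon}$ or $u_j$ within $\sqrt{\epsilon}$ of $\sigma_j=-\mathrm{sign}(h_j(u^{(j-1)}))$. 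Taking $\delta$ small enough in terms of $\epsilon$ therefore yields $\|u-\sigma\|_2^2/N\le O(\sqrt{\epsilon})+4\,|S|/N$, where $S=\{j:|h_j(u^{(j-1)})|<\sqrt{\epsilon}\}$.

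The conjecture thus reduces to the independently interesting claim $|S|=o(N)$ uniformly over all near-optimal $u$, i.e.\ no positive fraction of the cavity fields generated along the $\Pi_N^{\sign}$ trajectory are vanishingly small. This is the principal obstacle. For a fixed $u$ each $h_j(u)$ is a centered Gaussian of variance $\Theta((\|u\|_2^2/N)^{p-1})$, and any near-optimal $u$ satisfies $\|u\|_2^2/N\ge c>0$ since $|A(u)|\le \|A\|_{\rm op}\,\|u\|_2^p$ with $\|A\|_{\rm op}=O(N^{(2-p)/2})$ w.h.p., so single-site anti-concentration is immediate; a naive union bound nevertheless fails because any $\sqrt{N}$-net in $\calH_N$ has exponential cardinality. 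A promising route is to exploit Theorem~\ref{theorem:OGP-for-tensors} directly: if a positive density of sites in $\sigma$ carried cavity fields below $\sqrt{\epsilon}$, one could independently flip those spins to produce exponentially many near-ground-state configurations in $\calB_N$ whose pairwise overlaps densely fill an interval, contradicting the forbidden overlap window $(\nu_1,\nu_2)$. An alternative is a Parisi--Toninelli cavity analysis at $\beta=\infty$ combined with a Kirszbraun extension as in Section~\ref{eq:concentration-of-V}, in order to transfer the absence of atoms in the Gibbs local-field density to the zero-temperature problem on $\calH_N$.
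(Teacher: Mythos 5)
You are asking me to compare your attempt against the paper's proof, but the statement you were assigned is Conjecture~\ref{conjecture:near-binary}, which the authors explicitly leave open (``Unfortunately, we are not able to show this and thus state it as an interesting open problem''); there is no proof in the paper to compare against, so your argument must stand on its own — and, as you yourself acknowledge, it does not. The reduction you carry out is sound in outline: the coordinate-wise linearity of $A$ in each $u_j$ (up to repeated-index terms of lower order, which need a uniform-in-$u$ bound but this is routine), the nonnegativity of each per-site decrease $|h_j(u^{(j-1)})|\bigl(1+\sign(h_j(u^{(j-1)}))u_j\bigr)$, and the pigeonhole/AM--GM split are all correct, and they mirror the heuristic the paper already gives for why $\Pi_N^{\sign}$ never raises the objective. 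But the reduction transfers the entire difficulty into the claim $|S|=o(N)$ uniformly over all near-optimal $u\in\calH_N$, where $S$ is the set of sites with cavity field below $\sqrt{\epsilon}$ along the rounding trajectory. That claim is essentially a restatement of the conjecture itself — a near-optimizer lying ``deep'' inside $\calH_N$ is precisely one supported on a positive density of sites with vanishing effective fields — and you do not prove it.

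The two routes you sketch for closing this gap do not work as stated. For the OGP route: flipping a positive density $\rho$ of spins does \emph{not} change the energy by only the sum of the individual $2|h_j|$ plus $o(N)$, because the terms of $A$ with two or more indices in the flipped set constitute a $p$-spin model on $\rho N$ coordinates with the original variance and are generically of order $N$; hence the flipped configurations need not stay within the OGP tolerance $\mu$ of the ground state. Moreover your estimates control $h_j(u^{(j-1)})$ along the trajectory, not the cavity fields of the terminal $\sigma$, so ``independently flipping those spins of $\sigma$'' is not licensed by anything you have shown; and even granting near-optimality of the flipped configurations, producing a contradiction with Theorem~\ref{theorem:OGP-for-tensors} requires exhibiting a pair whose normalized overlap lands in the specific window $(\nu_1,\nu_2)$, which you do not arrange. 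The alternative suggestion (a zero-temperature cavity analysis plus a Kirszbraun extension) is a pointer, not an argument. So what you have is an honest partial reduction with the central anti-concentration step — the genuinely hard part — missing; the conjecture remains open after your proposal.
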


\section{Preliminary technical results}\label{section:preliminary}
In this section we establish several preliminary results.  We begin by recalling the following operator norm bound
\begin{lemma}
\label{lemma:op-norm-bound} There exist constants  $C,c>0$ ,
such that 
\begin{align*}
\pr(\norm{A}_{\rm op} & >CN^{1-p/2})\leq e^{-cN},
\end{align*}
for all sufficiently large $N$.
\end{lemma}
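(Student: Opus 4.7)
The strategy is a standard $\epsilon$-net argument applied to the multilinear form $A(u_1,\ldots,u_p)$. The main point is to reduce the supremum over the $p$-fold product of spheres to a maximum over a finite net, then use the Gaussian tail for a single fixed multilinear evaluation.

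\textbf{Step 1: Rescaling.} Write $A = N^{-(p-1)/2} G$ where $G \in (\R^N)^{\otimes p}$ has i.i.d. standard Gaussian entries. Then $\|A\|_{\rm op} = N^{-(p-1)/2}\|G\|_{\rm op}$, so it suffices to show $\|G\|_{\rm op} \le C' \sqrt{N}$ with probability at least $1-e^{-cN}$.

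\textbf{Step 2: Net reduction.} Fix a small $\epsilon \in (0, 1/(2p))$ and let $\mathcal{N} \subset S^{N-1}$ be an $\epsilon$-net of the unit sphere of cardinality $|\mathcal{N}| \le (3/\epsilon)^N$. For any unit vectors $v_1,\ldots,v_p$, choose $u_j \in \mathcal{N}$ with $\|v_j - u_j\|_2 \le \epsilon$. By the telescoping identity
\begin{align*}
G(v_1,\ldots,v_p)-G(u_1,\ldots,u_p) = \sum_{j=1}^p G(v_1,\ldots,v_{j-1}, v_j-u_j, u_{j+1},\ldots,u_p),
\end{align*}
each summand on the right is bounded by $\epsilon \|G\|_{\rm op}$ (using the multilinearity and the definition of $\|G\|_{\rm op}$ applied to the unit vector $(v_j-u_j)/\|v_j-u_j\|_2$). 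Taking the supremum over unit $v_j$'s yields
\begin{align*}
\|G\|_{\rm op}\le \max_{u_1,\ldots,u_p\in\mathcal{N}}|G(u_1,\ldots,u_p)| + p\epsilon\|G\|_{\rm op},
\end{align*}
and hence $\|G\|_{\rm op} \le (1-p\epsilon)^{-1} \max_{u_1,\ldots,u_p\in\mathcal{N}}|G(u_1,\ldots,u_p)|$.

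\textbf{Step 3: Gaussian tail plus union bound.} For any fixed $(u_1,\ldots,u_p) \in (S^{N-1})^p$ the random variable $G(u_1,\ldots,u_p)$ is Gaussian with mean $0$ and variance $\prod_j \|u_j\|_2^2 = 1$. Thus $\pr(|G(u_1,\ldots,u_p)| > t) \le 2e^{-t^2/2}$. A union bound over the $|\mathcal{N}|^p \le (3/\epsilon)^{pN}$ choices gives
\begin{align*}
\pr\!\left(\max_{u_1,\ldots,u_p\in\mathcal{N}}|G(u_1,\ldots,u_p)| > t\right) \le 2\,(3/\epsilon)^{pN} e^{-t^2/2}.
\end{align*}
Choosing $t = C''\sqrt{N}$ with $C''$ sufficiently large (depending on $p$ and $\epsilon$) makes the right-hand side at most $e^{-cN}$ for some $c>0$. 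Combining with Step 2 gives $\|G\|_{\rm op} \le (1-p\epsilon)^{-1}C''\sqrt{N}$ with the same probability, and rescaling via Step 1 yields $\|A\|_{\rm op} \le C N^{1-p/2}$ for an appropriate constant $C$.

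I do not anticipate a real obstacle here; the only mild subtlety is the telescoping estimate in Step 2, which replaces the single sphere net used in the matrix ($p=2$) case by a $p$-fold net and absorbs the resulting $p\epsilon$ loss into the constant. Everything else is a direct Gaussian tail plus volumetric net bound.
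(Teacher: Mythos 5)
Your proposal is correct and follows essentially the same route as the paper's proof: an $\epsilon$-net on the $p$-fold product of spheres, the multilinearity/telescoping bound to absorb the $p\epsilon\|G\|_{\rm op}$ discretization error, a Gaussian tail bound for a fixed evaluation (variance $1$ after your rescaling, equivalently $N^{-p+1}$ in the paper's normalization), and a volumetric union bound with $\lambda\sim\sqrt{N}$. The only cosmetic differences are that you rescale to standard Gaussian entries first and spell out the telescoping identity that the paper asserts implicitly via multilinearity.
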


\begin{proof}
The proof of this result is verbatim that from \cite[Lemma 4.7]{arous2018bounding}. We include this for completeness. 

Let $\mathbb{S}^{N}=\{x:\norm{x}_{2}=1\}$ denote the unit $\ell_{2}$-
ball. We may then view $A$ as a centered Gaussian process on $(\mathbb{S}^{N})^{\times p}$,
with covariance
\[
\E{A(x_{1},..,x_{p})A(y_{1},...,y_{p})}=\frac{1}{N^{(p-1)}}\prod_{1\le i\le p} \langle x_{i},y_{i}\rangle.
\]
This process is rotationally invariant.
Fix an $\epsilon>0$, let $\Sigma_{\epsilon}$ denote an $\epsilon-$net
for $\mathbb{S}^{N}$  with respect to $\|\cdot\|_2$ norm,  and let $\Sigma_{\epsilon}^{p}$ denote is $p$-fold
cartesian product. By multilinearity of $A$, 
\[
\norm{A}_{\rm op}\leq\sup_{(x_{1},...,x_{k})\in\Sigma_{\epsilon}^{p}}A(x_{1},...,x_{p})+\epsilon p \norm{A}_{\rm op}.
\]
If we choose $\epsilon$ so that $2p\epsilon\leq1$, we have 
\begin{align*}
\pr\left(\norm{A}_{\rm op}>\lambda\right) & \leq\pr\left(\cup_{\mathbf{x}\in\Sigma_{\epsilon}^{p}}\left\{ A(x_{1},...,x_{p})\geq\lambda/2\right\} \right).
\end{align*}
To bound the right hand side, note that for any fixed $(x_{1},..,x_{p})\in(\mathbb{S}^{N})^{p},$
$A(x_{1},...,x_{p})$ is a centered Gaussian with variance $N^{-p+1},$
so that 
\[
\mathbb{P}(A(x_{1},...,x_{p})\geq\lambda N^{1-p/2})\leq e^{-N\lambda^{2}/2}.
\]
where in the second line, $(x_{1},...,x_{p})$ is any point in $\Sigma^{p}$,
$\abs{\Sigma^{k}}$ denotes its cardinality, and the final inequality
comes from a Gaussian tail bound since $A(x_{1},..,x_{p})$ 
is a centered Gaussian with variance $N^{-p+1}$ . Note furthermore
that we may choose this net so that $\abs{\Sigma_{\epsilon}}\leq(4/\epsilon)^{N},$
\cite[Lemma 5.1]{vershynin2018high}. Thus by rotation invariance and a
union bound, we see that 
\[
\pr(\norm{A}_{\rm op}\geq\lambda N^{1-p/2})\leq\left(\frac{4}{\epsilon}\right)^{pN}e^{-N\lambda^{2}/2}.
\]
Choosing $\lambda$ sufficiently large yields the result. 
\end{proof}
\begin{prop}
\label{prop:A-is-Lipschitz} There exists $c_{2},c>0$ which depend
on $M$ such that 
\[
\pr(\max_{u,v\in[-M,M]^{N}}\frac{\norm{A(\cdot,u)-A(\cdot,v)}_{2}}{\norm{u-v}_{2}}\geq c_{2})\leq\exp(-cN),
\]
for all sufficiently large $N$.
\end{prop}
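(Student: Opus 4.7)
The plan is to reduce the bound to a telescoping decomposition that lets me invoke Lemma~\ref{lemma:op-norm-bound} uniformly in $u,v$. First, I would write, via the multilinearity of $A$,
\[
A(\cdot, u) - A(\cdot, v) \;=\; \sum_{j=1}^{p-1} A\bigl(\cdot,\underbrace{u,\ldots,u}_{j-1},\, u-v,\,\underbrace{v,\ldots,v}_{p-1-j}\bigr),
\]
so by the triangle inequality it suffices to bound each of the $p-1$ summands. By definition of $\|\cdot\|_{\rm op}$, for any vectors $w_1,\ldots,w_{p-1}\in\R^N$,
\[
\|A(\cdot,w_1,\ldots,w_{p-1})\|_2 \;=\; \sup_{\|w_0\|_2=1} A(w_0,w_1,\ldots,w_{p-1}) \;\le\; \|A\|_{\rm op}\prod_{i=1}^{p-1}\|w_i\|_2.
\]
Therefore each summand in the telescoping decomposition is bounded by $\|A\|_{\rm op}\,\|u-v\|_2\,\|u\|_2^{\,j-1}\|v\|_2^{\,p-1-j}$.

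Next I would use the cube constraint: for $u,v\in[-M,M]^N$ we have $\|u\|_2,\|v\|_2\le M\sqrt{N}$. Combining this with the above gives
\[
\|A(\cdot,u)-A(\cdot,v)\|_2 \;\le\; (p-1)\,\|A\|_{\rm op}\,M^{p-2}\,N^{(p-2)/2}\,\|u-v\|_2.
\]
Now I invoke Lemma~\ref{lemma:op-norm-bound}: on the event $\mathcal{E}\triangleq\{\|A\|_{\rm op}\le CN^{1-p/2}\}$, which has probability at least $1-e^{-cN}$ for all large $N$, the prefactor becomes
\[
(p-1)\,C\,N^{1-p/2}\,M^{p-2}\,N^{(p-2)/2} \;=\; (p-1)\,C\,M^{p-2},
\]
since the powers of $N$ cancel: $1-p/2+(p-2)/2=0$. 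Setting $c_2\triangleq (p-1)CM^{p-2}$ yields the desired uniform Lipschitz bound on the event $\mathcal{E}$, which is precisely the conclusion of the proposition (with the same $c>0$).

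I do not anticipate any real obstacle here beyond the telescoping bookkeeping; the proof is essentially a one-line application of the operator-norm bound after the multilinear decomposition, and the cancellation of powers of $N$ is exactly what makes the operator-norm scaling in Lemma~\ref{lemma:op-norm-bound} compatible with the $\ell_2$-size of vectors in $[-M,M]^N$. The only thing to be careful about is ensuring the single high-probability event from Lemma~\ref{lemma:op-norm-bound} suffices for the \emph{uniform} bound over all $u,v$, but this is automatic because the operator-norm bound is itself already uniform over all unit vectors.
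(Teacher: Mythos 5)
Your proof is correct and matches the paper's argument essentially verbatim: the same multilinear telescoping of $A(\cdot,u)-A(\cdot,v)$, the same operator-norm bound on each summand, the bound $\|u\|_2,\|v\|_2\le M\sqrt N$ from the cube constraint, and the conclusion via Lemma~\ref{lemma:op-norm-bound}, with the powers of $N$ cancelling exactly as you note. The only (immaterial) difference is that you track the constant $(p-1)$ explicitly, which the paper absorbs into the constant.
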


\begin{proof}
By multilinearity of $A$, the triangle inequality, and the definition of the operator norm,
\begin{align*}
\|A(\cdot,u,\ldots,u)-A(\cdot,v,\ldots,v)\|_2 & \leq\|A(\cdot,u-v,\ldots,u)\|_2+...+\|A(\cdot,v,\ldots,v,u-v)\|_2\\
 & \leq \norm{A}_{\rm op}\max\{\norm{v}_2^{p-2},\norm{u}_2^{p-2}\}\norm{u-v}_2.
 \end{align*}
Since $u,v\in [-M,M]^N$, it follows that $\norm{v}_2,\norm{u}_2\leq M \sqrt{N}$. Applying the bound from 
Lemma~\ref{lemma:op-norm-bound}, we obtain the result.
\end{proof}
\begin{lemma}
\label{lemma:Norm-V-large} There exist $c,C>0$ such that with probability  for every at least $1-\exp(-cN)$ for all large $N$
the following holds: for every
$\eta>0,$ every $u\in H_{N}$ satisfying $A(u)\leq -\eta N$ also satisfies $\norm{u}_{2}\geq C\eta^{1/p}\sqrt{N}$.
\end{lemma}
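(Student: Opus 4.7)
The plan is to combine the operator norm bound from Lemma~\ref{lemma:op-norm-bound} with the multilinearity of $A$ to obtain a pointwise upper bound on $|A(u)|$ in terms of $\|u\|_2$, and then invert it.

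Concretely, I would first observe that $A(u) = A(u,u,\ldots,u)$ is a homogeneous polynomial of degree $p$ in $u$, so by the definition of the operator norm,
\[
|A(u)| \le \|A\|_{\rm op}\,\|u\|_2^p \qquad \text{for every } u\in\R^N.
\]
Next, I would invoke Lemma~\ref{lemma:op-norm-bound}: on an event $\calE$ of probability at least $1-\exp(-cN)$, we have $\|A\|_{\rm op}\le C N^{1-p/2}$ for a suitable constant $C>0$.

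Working on $\calE$, suppose $u\in\calH_N$ satisfies $A(u)\le -\eta N$ for some $\eta>0$. Then $|A(u)|\ge \eta N$, so combining with the previous inequalities,
\[
\eta N \le |A(u)|\le \|A\|_{\rm op}\,\|u\|_2^p \le C N^{1-p/2}\,\|u\|_2^p.
\]
Rearranging gives $\|u\|_2^p \ge (\eta/C) N^{p/2}$, and hence $\|u\|_2 \ge C'\eta^{1/p}\sqrt{N}$ with $C' = C^{-1/p}$. Since the event $\calE$ does not depend on $\eta$ or $u$, the stated uniform-over-$\eta$ conclusion follows.

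There is no real obstacle here: the only probabilistic input is the operator norm bound, which is already established; the remainder is a one-line deterministic manipulation using the elementary estimate $|A(u)|\le \|A\|_{\rm op}\|u\|_2^p$.
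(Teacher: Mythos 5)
Your proposal is correct and takes essentially the same route as the paper: both arguments combine the high-probability operator norm bound $\|A\|_{\rm op}\le CN^{1-p/2}$ from Lemma~\ref{lemma:op-norm-bound} with the deterministic estimate $|A(u)|\le\|A\|_{\rm op}\|u\|_2^p$ and then invert to get $\|u\|_2\ge C'\eta^{1/p}\sqrt{N}$. Your observation that the exceptional event depends only on $A$ (not on $\eta$ or $u$), so the conclusion is uniform, matches the paper's intent and is stated a bit more explicitly than in the paper's own write-up.
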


\begin{proof}
Note that by Lemma \ref{lemma:op-norm-bound}, with probability at least $1-\exp(-cN)$
\[
\abs{\max_{\norm{u}_{2}\leq\delta\sqrt{N}}A(u)}\leq\delta^{p}N^{\frac{p}{2}}\cdot\norm{A}_{\rm op}\leq C\delta^{p}N.
\]
Thus if $A(u)\leq-\eta N$, it must be that 
$\norm{u}_{2}^{p}\norm{A}_{\rm op}\geq\eta N,$
so that 
\[
\norm{u}_{2}\geq\frac{1}{C}\eta^{1/p}N^{1/2},
\]
where $C$ is as in Lemma \ref{lemma:op-norm-bound}. 
\end{proof}

\section{Continuous dependence}\label{section:Iterations-and-bounds}
When we view Algorithm~\ref{alg:AMP} as a discrete time dynamical system, it is natural to expect that 
this admits a similar dependence on the tensor $A$ as a time-inhomogenous
differential equation of the same form. Thus our proof of continuous dependence of iterations on the tensor $A$ 
can be viewed as a discrete analogue of similar  standard result for differential equations, see, e.g.,  \cite[Section 2.4]{teschl2012ordinary}.

Given any tensor $B\in (\R^N)^{\otimes p}$, let 
\begin{align}\label{eq:Max-norm-A(u)}
c_2(B)\triangleq \sup_{u\ne v\in [-M,M]^N}{\|B(\cdot,u)-B(\cdot,v)\|_2\over \|u-v\|_2}.
\end{align}
We now state the main result of this section.
\begin{theorem}\label{theorem:iteration-bounds}
Let $B,\hat{B}\in\left(\R^{N}\right)^{\otimes p}$, and let $\hat{V}^{t},V^{t}$
denote the corresponding sequences output by Step 2 of Algorithm~\ref{alg:AMP},
with the same initial vector $U^{0}=\hat{U}^{0}$. Under 
Assumption~\ref{assumptions:F}, there is some constant $K$ which depends only on $\zeta$ and $c_2(\hat B)$, 
such that for every $T\geq1$ and $U^0$,
\[
\sup_{1\leq t\leq T}\norm{\hat{V}^{t}-V^{t}}\leq K^{T}\norm{\hat{B}-B}_{\rm op}\left(\zeta M\sqrt{N T}\right)^{p-1}.
\]
\end{theorem}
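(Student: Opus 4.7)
\medskip

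\noindent\textbf{Proof proposal.} My plan is to set up a one-step recursion for the discrepancy $\Delta^t \triangleq \|\hat U^t - U^t\|_2$ and then iterate it. Since truncation to $[-1,1]$ coordinate-wise is $1$-Lipschitz, the bound on $\Delta^t$ transfers directly to $\|\hat V^t - V^t\|_2$, so it suffices to control the $U$-iterates.

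First, using that the coordinate-wise application of the $\zeta$-Lipschitz map $F_t$ yields a map on $\R^N$ satisfying the vector Lipschitz bound $\|F_t(\hat a,\hat U^0,\dots,\hat U^{t-1}) - F_t(a,U^0,\dots,U^{t-1})\|_2 \le \zeta\bigl(\|\hat a - a\|_2 + \sum_{s=0}^{t-1}\Delta^s\bigr)$, and combining with the $1$-Lipschitz truncation $[\cdot]_M$, I obtain
\[
\Delta^t \;\le\; \zeta\,\|\hat a^t - a^t\|_2 \;+\; \zeta\sum_{s=0}^{t-1}\Delta^s,
\]
where $a^t \triangleq B(\,\cdot\,, f_t(U^0,\dots,U^{t-1}))$ and $\hat a^t$ is defined analogously with $\hat B$ and the hatted iterates.

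Next, the key is to split the tensor-dependent term via the triangle inequality, keeping the stability of $\hat B$ rather than $B$:
\[
\|\hat a^t - a^t\|_2 \;\le\; \bigl\|\hat B(\,\cdot\,,\hat w^t) - \hat B(\,\cdot\,,w^t)\bigr\|_2 \;+\; \bigl\|(\hat B - B)(\,\cdot\,,w^t)\bigr\|_2,
\]
where $w^t = f_t(U^0,\dots,U^{t-1})$ and $\hat w^t$ is the hatted analogue. The first summand is bounded by $c_2(\hat B)\|\hat w^t - w^t\|_2$ by the definition \eqref{eq:Max-norm-A(u)} (since $\hat w^t,w^t \in [-M,M]^N$, which follows from the $\zeta$-Lipschitz property of $f_t$, the bound $f_t(0)=0$, and the truncation; absorbing a constant into $\zeta M$ as needed). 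Applying the coordinate-wise Lipschitz property of $f_t$, this gives $\|\hat w^t - w^t\|_2 \le \zeta\sqrt{\sum_{s<t}(\Delta^s)^2} \le \zeta\sum_{s<t}\Delta^s$. The second summand is bounded by $\|\hat B - B\|_{\rm op}\,\|w^t\|_2^{p-1}$ using multilinearity exactly as in the proof of Proposition~\ref{prop:A-is-Lipschitz} (expanding $(\hat B - B)(\cdot, w^t, \ldots, w^t)$ into $p-1$ telescoping terms is unnecessary here since the vector is the same in all slots). Since $|f_t(x)| \le \zeta\|x\|_2$ for $x\in [-M,M]^t$, one has $\|w^t\|_2 \le \zeta M\sqrt{tN}$, so the second summand is at most $\|\hat B - B\|_{\rm op}(\zeta M\sqrt{tN})^{p-1}$.

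Combining, with $K' \triangleq \zeta(1 + \zeta c_2(\hat B))$ and $R_T \triangleq \zeta\|\hat B - B\|_{\rm op}(\zeta M\sqrt{TN})^{p-1}$, I get the recursion
\[
\Delta^t \;\le\; K'\sum_{s=0}^{t-1}\Delta^s \;+\; R_T, \qquad 1 \le t \le T,
\]
with $\Delta^0 = 0$. A routine induction (or discrete Gronwall) yields $\Delta^t \le R_T\,(1+K')^{t-1}$, and hence $\max_{1\le t\le T}\Delta^t \le K^T \|\hat B - B\|_{\rm op} (\zeta M\sqrt{TN})^{p-1}$ for a suitable $K$ depending only on $\zeta$ and $c_2(\hat B)$, which is the desired inequality after passing through the $1$-Lipschitz projection to $V^t$.

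The only step I expect to need slight care is the book-keeping that keeps $\hat w^t, w^t$ inside $[-M,M]^N$ so that $c_2(\hat B)$ is an admissible modulus --- this is where Assumption~\ref{assumptions:F} (in particular $f_t(0)=0$ and Lipschitzness of $f_t$, together with the $M$-truncation in the definition \eqref{eq:Iteration-U^t}) is used; everything else is a straightforward triangle-inequality/induction argument paralleling the proof for time-inhomogeneous ODEs in \cite[Section 2.4]{teschl2012ordinary}.
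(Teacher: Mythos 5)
Your proposal is correct and follows essentially the same route as the paper's proof: $1$-Lipschitz truncation, the Lipschitz property of $F_t$, the triangle-inequality split that keeps the stability modulus $c_2(\hat B)$ on one term and $\norm{\hat B-B}_{\rm op}\,\norm{w^t}_2^{p-1}$ with $\norm{w^t}_2\le \zeta M\sqrt{tN}$ on the other, followed by a discrete Gronwall iteration. The only cosmetic difference is that you track $\Delta^t=\norm{\hat U^t-U^t}_2$ with a running sum, while the paper tracks the cumulative quantity $\beta_N(t)=\bigl(\sum_{s\le t}\norm{U^s-\hat U^s}_2^2\bigr)^{1/2}$; the domain book-keeping for $w^t$ that you flag is glossed over in the paper in exactly the same way, so it is not a gap relative to the published argument.
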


\begin{proof}
Define $U^{t}$ and $\hat{U}^{t}$ as in Step 1 of Algorithm~\ref{alg:AMP},
 and let $\mathbf{U}_{t}=\left(U^{s}\right)_{0\le s\leq t}$ and $\hat{\mathbf{U}}_{t}=\left(\hat{U}^{s}\right)_{0\le s\leq t}$.
Since the map $f(x)=[x]_{M}$ is 1-Lipschitz for any $M$, we see
that the claim of the theorem  follows, provided that
\[
\beta_{N}(t)\triangleq\norm{\mathbf{U}_{t}-\hat{\mathbf{U}_{t}}}_2=\sqrt{\sum_{s\leq t}\norm{U^{s}-\hat{U}^{s}}_{2}^{2}}
\]
satisfies 
\begin{align}\label{eq:beta_bound}
\beta_{N}(t)\leq(K(c_{2}(\hat B)+1))^{t}\norm{\hat{B}-B}_{\rm op}\left(\zeta M\sqrt{Nt}\right)^{p},
\end{align}
as trivially $\|\hat U^t-U^t\|_2\le \beta_{N}(t)$.

Thus we establish (\ref{eq:beta_bound}). By 1-Lipschitz continuity of $[\cdot]_M$ we have
\begin{align*}
&\norm{\hat U^{t+1}-{U}^{t+1}}_2  \leq \\
&\norm{F_{t+1}(\hat{B}(\cdot,f_{t+1}(\hat{U}^{t},\hat{U}^{t-1},\ldots,\hat{U}_{0})),\hat{U}^{0},...,\hat{U}^{t})
-F_{t+1}(B(\cdot,f_{t+1}(U^{0},...,U^{t})),U_{0},\ldots,U^{t})}_{2}.
\end{align*}
Applying  the part of Assumption~\ref{assumptions:F} regarding $F_t$, we see that 
\[
\norm{\hat U^{t+1}-{U}^{t+1}}_2
\leq
\zeta\sqrt{\beta_{N}^{2}(t)+\norm{\hat{B}(\cdot,f_{t+1}(\hat{\mathbf{U}}_{t}))-B(\cdot,f_{t+1}(\mathbf{U}_{t}))}_{2}^{2}},
\]
so that 
\[
\beta_{N}(t+1)\leq (1+\zeta^2)\sqrt{\beta_{N}^{2}(t)+\norm{\hat{B}(\cdot,f_{t+1}(\hat{\mathbf{U}}_{t}))-B(\cdot,f_{t+1}(\mathbf{U}_{t}))}_{2}^{2}}.
\]
By the triangle inequality, 
\begin{align*}
&\norm{\hat{B}(\cdot,f_{t+1}(\hat{\mathbf{U}}_{t}))-B(\cdot,f_{t+1}(\mathbf{U}_{t}))}_{2} \\
&\leq
\norm{\hat{B}(\cdot,f_{t+1}(\hat{\mathbf{U}}_{t}))-\hat{B}(\cdot,f_{t+1}(\mathbf{U}_{t}))}_{2}
+\norm{\hat{B}(\cdot,f_{t+1}(\mathbf{U}_{t}))-B(\cdot,f_{t+1}(\mathbf{U}_{t}))}_2\\
&=I+II.
\end{align*}
By definition of $c_{2}(\hat{B})$,
\[
I\leq c_{2}(\hat{B})\norm{f_{t+1}(\mathbf{\hat{U}}_{t})-f_{t+1}(\mathbf{U}_{t})}_2\leq \zeta c_{2}(\hat{B})\beta_{N}(t).
\]
We now analyze $II$. Note that by Assumption~\ref{assumptions:F}
\begin{align*}
\|f_{t+1}(\mathbf{U}_{t})\|_2^2 &\le \zeta^2\sum_{0\le i\le t}\|U^i\|_2^2 \le \zeta^2(t+1)M^2N.
\end{align*}
Then
\begin{align*}
II\le \|\hat B-B\|_{\rm op} (M\zeta \sqrt{N(t+1)})^{p-1}.
\end{align*}
Combining these bounds, we obtain, 
\[
\norm{B(\cdot,f_{t+1}(\mathbf{U}_{t}))-\hat{B}(\cdot,f_{t+1}(\hat{\mathbf{U}}_{t}))}_{2}\leq 
\zeta c_{2}(\hat B)\beta_{N}(t)+\norm{\hat{B}-B}_{\rm op}\left(\zeta M\sqrt{N(t+1)}\right)^{p-1}.
\]
Plugging this in to the above, yields
\begin{align*}
\beta_{N}(t+1) & \leq 
(1+\zeta^2)\sqrt{\beta_{N}^{2}(t)+\left(\zeta c_{2}(\hat{B})\beta_{N}(t)+\norm{\hat{B}-B}_{\rm op}\left(\zeta M\sqrt{Nt}\right)^{p-1}\right)^{2}}.
\end{align*}
We can write the inequality above in the form
\begin{align*}
\beta_{N}(t+1) \leq K\beta_{N}(t)+b(t),
\end{align*}
where $b(t)$ is non-decreasing, and $K>1$ which depends only on $c_{2}(\hat{B})$ and $\zeta$. The inequality above is a discrete version
of Gronwall's inequality and using $\beta_N(0)=0$, easily leads to a bound

\begin{align*}
\beta_{N}(t) & \leq K^{t}b(t)=K^{t}\norm{\hat{B}-B}_{\rm op}\left(\zeta M\sqrt{Nt}\right)^{p-1}.\qedhere
\end{align*}
\end{proof}

\section{Concentration property of the AMP solution}\label{eq:concentration-of-V}
In this section we establish that the value associated with the solution $V$ produced by the AMP is concentrated around its expectation.

\begin{theorem}\label{theorem:solution-concentrated}
Suppose that Assumption~\ref{assumptions:F} holds. 
For any $\epsilon,M,T,\zeta$, there exists $c>0$ such that 
such that the value $A(V)$ associated with the solution $V$ produced in Step~2 of Algorithm \ref{alg:AMP}  satisfies
\begin{align*}
\sup_{U^0\in[-M,M]^N} \pr\left(|A(V)-\E{A(V)\vert U^0}|\ge \epsilon N\vert U^0\right)\le \exp(-cN),
\end{align*}
for all sufficiently large $N$.
\end{theorem}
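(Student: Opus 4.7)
The plan is to realize $A(V)$ as a Lipschitz function of $A$ (with respect to the Frobenius norm) on a suitable high-probability event, extend it to a globally Lipschitz function via Kirszbraun's theorem, and then apply the Gaussian concentration inequality (Borell-TIS) for Lipschitz functions of the i.i.d.\ Gaussian entries of $A$. Throughout, $U^0$ is regarded as fixed.

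First, I would fix a constant $c_2$ via Proposition~\ref{prop:A-is-Lipschitz} and a constant $C$ via Lemma~\ref{lemma:op-norm-bound}, and define the good event
\[
\calG \triangleq \left\{\|A\|_{\rm op}\le CN^{1-p/2},\ c_2(A)\le c_2\right\},
\]
which satisfies $\pr(\calG^c)\le \exp(-cN)$. On $\calG$, for any two tensors $A,A'\in(\R^N)^{\otimes p}$ with $A\in\calG$, I would bound the Lipschitz constant of the map $\Psi(A)\triangleq A(V(A,T,U^0))$ by splitting
\[
\Psi(A')-\Psi(A)=\bigl(A'-A\bigr)(V(A'))+\bigl(A(V(A'))-A(V(A))\bigr).
\]
The first term is at most $\|A'-A\|_2\cdot \|V(A')\|_2^p\le \|A'-A\|_2\cdot N^{p/2}$ since $V(A')\in\calH_N$. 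For the second, multilinearity gives $|A(V(A'))-A(V(A))|\le p\|A\|_{\rm op}N^{(p-1)/2}\|V(A')-V(A)\|_2$, and Theorem~\ref{theorem:iteration-bounds} (applied with $\hat B=A$, $B=A'$, which on $\calG$ fixes $K$ by a constant depending only on $\zeta$ and $c_2$) yields $\|V(A')-V(A)\|_2\le K^T(\zeta M\sqrt{NT})^{p-1}\|A'-A\|_{\rm op}\le K^T(\zeta M\sqrt{NT})^{p-1}\|A'-A\|_2$. Combining, $|\Psi(A')-\Psi(A)|\le L\,\|A'-A\|_2$ with $L=L(p,T,M,\zeta)\,N^{p/2}$.

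Next, I would restrict $\Psi$ to $\calG$ (viewed as a subset of $\R^{N^p}$), invoke Kirszbraun's theorem to produce a global $L$-Lipschitz extension $\tilde\Psi\colon \R^{N^p}\to\R$ coinciding with $\Psi$ on $\calG$. Writing $A=N^{-(p-1)/2}W$ with $W$ a standard Gaussian vector on $\R^{N^p}$, the map $W\mapsto\tilde\Psi(N^{-(p-1)/2}W)$ has Lipschitz constant $LN^{-(p-1)/2}=L'N^{1/2}$. Borell-TIS (Theorem~2.1.11 of~\cite{adler2009random}) then gives
\[
\pr\bigl(|\tilde\Psi(A)-\E{\tilde\Psi(A)\,|\,U^0}|\ge \tfrac{\epsilon}{2} N\,|\,U^0\bigr)\le 2\exp\!\bigl(-\epsilon^2 N/(8L'^2)\bigr).
\]

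To pass back to $\Psi$, I would bound $|\E{\Psi\,|\,U^0}-\E{\tilde\Psi\,|\,U^0}|\le \E{(|\Psi|+|\tilde\Psi|)\mathbf{1}_{\calG^c}\,|\,U^0}$. Since $|\Psi(A)|\le \|A\|_2 N^{p/2}$ and $|\tilde\Psi(A)|\le |\tilde\Psi(0)|+L\|A\|_2$ are polynomial in $N$ with finite Gaussian moments, Cauchy-Schwarz against $\pr(\calG^c)\le e^{-cN}$ shows this difference is $o(1)$, hence eventually less than $\epsilon N/2$. The theorem then follows by combining the three bounds: $\pr(|\Psi-\E{\Psi\,|\,U^0}|\ge \epsilon N\,|\,U^0)\le \pr(\calG^c\,|\,U^0)+\pr(|\tilde\Psi-\E{\tilde\Psi\,|\,U^0}|\ge \epsilon N/2\,|\,U^0)\le \exp(-c'N)$.

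The main obstacle is obtaining a Lipschitz estimate that scales as $N^{p/2}$ in the Frobenius norm; this is precisely the scaling that balances against the $N^{-(p-1)}$ entry-variance so as to yield $\exp(-cN)$ concentration for deviations of order $\epsilon N$. Getting this right requires using the continuous-dependence estimate of Theorem~\ref{theorem:iteration-bounds} in $\|\cdot\|_{\rm op}$ (not $\|\cdot\|_2$) for the internal step, so that the factor $N^{(p-1)/2}$ picked up from $\|V\|_2\le\sqrt{N}$ combines with the $N^{1-p/2}$ bound on $\|A\|_{\rm op}$ to give a net $N^{1/2}$ rather than blowing up. Extending globally via Kirszbraun, rather than working on $\calG$ directly, is essential since Borell-TIS demands global Lipschitzness.
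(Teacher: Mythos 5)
Your proposal is correct and follows essentially the same route as the paper's proof: a Lipschitz estimate of order $N^{1/2}$ (in the standard-Gaussian scaling) for $A\mapsto A(V(A))$ on a high-probability set via Theorem~\ref{theorem:iteration-bounds}, a Kirszbraun extension to a globally Lipschitz function, Gaussian concentration, and a comparison of the conditional expectations of the original and extended functions using Cauchy--Schwarz against the exponentially small bad event. The only differences are cosmetic: you work with $\Psi(A)=A(V(A))$ and rescale at the end rather than with $f(Z)$ for the standard-normal tensor $Z$, and you control the term $A(V(A'))-A(V(A))$ via $\|A\|_{\rm op}$ and multilinearity where the paper uses the constant $c_2(A)$.
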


\begin{proof}
Fix $U^0$.
Our approach is based on Gaussian concentration combined with Kirszbraun's Theorem. Let $Z\in (\R^N)^{\otimes p}$ denote a tensor
consisting of i.i.d. standard normal entries, so that $A=Z/N^{p-1\over 2}$ in distribution. We let $f(Z)=A(V(A))=Z(V(Z/N^{p-1\over2}))/N^{p-1\over 2}$, 
where $V=V(Z)$ is again the solution produced by AMP viewed as a function of $Z$, and thus introduce $f:(\R^N)^{\otimes p}\to \R$
defined by $f(z)=z(V(z/N^{p-1\over 2}))/N^{p-1\over 2}$. We first establish that this function is Lipschitz with an appropriate constant on an appropriate
subspace of $(\R^N)^{\otimes p}$. Recall the constant $c_2$ introduced in Proposition~\ref{prop:A-is-Lipschitz}. Let 
\begin{align*}
K_{2,N}=\left\{z\in (\R^N)^{\otimes p}: c_2\left({z \over N^{p-1\over 2}}\right)\le c_2\right\}.
\end{align*}
In particular, a random $Z$ with i.i.d. standard normal entries satisfies 
\begin{align}\label{eq:K-2-N}
\pr\left(Z\in K_{2,N}\right)\ge 1-\exp(-cN),
\end{align} for all large enough $N$,
where $c$ is as in the proposition. 

\begin{lemma}\label{lemma:f-lip}
There exists a constant $c=c(M,c_2,\zeta,T)$ such that for every $z_1,z_2\in K_{2,N}$ 
\begin{align*}
|f(z_2)-f(z_1)|\le c\sqrt{N}\|z_2-z_1\|_{2}.
\end{align*}
\end{lemma}

\begin{proof}
Applying Theorem~\ref{theorem:iteration-bounds},   for any $z_1,z_2\in K_{2,N}$ we have
\begin{align}
\|V(z_2/N^{p-1\over 2})-V(z_1/N^{p-1\over 2})\|_2
&\le cN^{p-1\over 2}\|N^{-{p-1\over 2}}(z_2-z_1)\|_{\rm op} \notag\\
&= c\|z_2-z_1\|_{\rm op}, \label{eq:z2-z1-F}
\end{align}
where $c=c(M,c_2,\zeta,T)$ is an appropriate constant.

Next,
\begin{align}
|f(z_1)-f(z_2)| & =N^{-{p-1\over 2}}|z_2(V(z_2/N^{p-1\over 2}))-z_1(V(z_1/N^{p-1\over 2}))| \notag\\
&\le N^{-{p-1\over 2}}|z_2(V(z_2/N^{p-1\over 2}))-z_1(V(z_2/N^{p-1\over 2}))| \notag\\
&+N^{-{p-1\over 2}}|z_1(V(z_2/N^{p-1\over 2}))-z_1(V(z_1/N^{p-1\over 2}))|   \label{eq:fz2-fz1}
\end{align}
We first analyze the second summand above. For simplicity we use  $v_1,v_2$ in place of $V(z_1/N^{p-1\over 2})$ and $V(z_2/N^{p-1\over 2})$.
Note $z_1(u)=\langle u,z(\cdot,u)\rangle$. Thus
\begin{align*}
|z_1(v_2)-z_1(v_1)|&=|\langle v_2,z_1(\cdot,v_2)\rangle-\langle v_1,z_1(\cdot,v_1)\rangle| \\
&\le |\langle v_2,z_1(\cdot,v_2)\rangle-\langle v_2,z_1(\cdot,v_1)\rangle|+|\langle v_2,z_1(\cdot,v_1)\rangle-\langle v_1,z_1(\cdot,v_1)\rangle|.
\end{align*}
Then
\begin{align*}
|\langle v_2,z_1(\cdot,v_2)\rangle-\langle v_2,z_1(\cdot,v_1)\rangle|&=|\langle v_2,z_1(\cdot,v_2)-z_1(\cdot,v_1)\rangle| \\
&\le \|v_2\|_2\|z_1(\cdot,v_2)-z_1(\cdot,v_1)\|_2 \\
&\le M\sqrt{N}N^{p-1\over 2}c_2(z_1/N^{p-1\over 2})\|v_2-v_1\|_2.
\end{align*}
Since $z_1\in K_{2,N}$, we obtain instead a bound 
\begin{align*}
M\sqrt{N}N^{p-1\over 2}c_2\|v_2-v_1\|_2\le M\sqrt{N}N^{p-1\over 2}c_2c\|z_2-z_1\|_{\rm op},
\end{align*}
where the inequality follows from (\ref{eq:z2-z1-F}).

For the second term we have
\begin{align*}
|\langle v_2,z_1(\cdot,v_1)\rangle-\langle v_1,z_1(\cdot,v_1)\rangle| 
&\le 
\|v_2-v_1\|_2\|z_1(\cdot,v_1)\|_2
\end{align*}
Since $z_1\in K_{2,N}$ and $z_1(\cdot,0)=0$, then 
\begin{align*}
\|z_1(\cdot,v_1)\|_2\le N^{p-1\over 2}c_2\|v_1\|_2\le N^{p-1\over 2}c_2M\sqrt{N}.
\end{align*} 
Using (\ref{eq:z2-z1-F}) to $\|v_2-v_1\|_2$, we obtain a bound $c\|z_2-z_1\|_{\rm op}N^{p-1\over 2}c_2M\sqrt{N}$.

Applying both bounds to (\ref{eq:fz2-fz1}) and using $\|\cdot\|_{\rm op}\le \|\cdot\|_2$ we complete the proof.
\end{proof}

We now complete the proof of the theorem.  For every $z\in (\R^N)^{\otimes p}$, define 
\begin{align*}
g(z)=\inf_{\hat z\in K_{2,N}}\left(f(\hat z)+c\sqrt{N}\|\hat z-z\|_2\right),
\end{align*}
where $c$ is as in Lemma~\ref{lemma:f-lip}.
Kirszbraun's Theorem says that $g$ is a Lipschitz continuous function with constant $c\sqrt{N}$ and $g=f$ on $K_{2,N}$. This is easy to verify.
Indeed, fix any $z_1,z_2\in (\R^N)^{\otimes p}$ and $\epsilon>0$. Find $\hat z_1\in K_{2,N}$ such that 
\begin{align*}
|g(z_1)-\left(f(\hat z_1)+c\sqrt{N}\|\hat z_1-z_1\|_2\right)|\le \epsilon.
\end{align*}
Then
\begin{align*}
g(z_2)-g(z_1)&\le f(\hat z_1)+c\sqrt{N}\|\hat z_1-z_2\|_2-\left(f(\hat z_1)+c\sqrt{N}\|\hat z_1-z_1\|_2\right)+\epsilon \\
&=c\sqrt{N}\left(\|\hat z_1-z_2\|_2-\|\hat z_1-z_1\|_2\right)+\epsilon \\
&\le c\sqrt{N}\|z_2-z_1\|_2.
\end{align*}
Using a similar reversed inequality, the Lipschitz continuity of $g$ is established. Now if $z\in K_{2,N}$ then by Lemma~\ref{lemma:f-lip}
for every $\hat z\in K_{2,N}$
\begin{align*}
f(\hat z)+c\sqrt{N}\|\hat z-z\|_2\ge f(z),
\end{align*}
implying that the infimum is achieved by $\hat z=z$, establishing the Kirszbraun's Theorem. 

In conclusion, $g$ is a Lipschitz continuous function with constant $c\sqrt{N}$. Thus by Gaussian concentration (see, e.g., \cite{vershynin2018high}), for every $t\ge 0$
\begin{align*}
\pr\left(|g(Z)-\E{g(Z)\vert U^0}|\ge t N\vert U^0\right)&\le \exp\left(-{t^2N\over 4c^2}\right).
\end{align*}
We now use the fact that $f=g$ on the high probability set $K_{2,N}$. Specifically
\begin{align*}
\E{g(Z)\vert U^0} &= \E{f(Z)\vert U^0}-\E{f(Z)\mathbf{1}\left(Z\notin K_{2,N}\right)\vert U^0}+\E{g(Z)\mathbf{1}\left(Z\notin K_{2,N}\right)\vert U^0}.
\end{align*}
Using $g(Z)\le f(0)+c\sqrt{N}\|Z\|_2=c\sqrt{N}\|Z\|_2$ we have 
\begin{align*}
\E{g(Z)\mathbf{1}\left(Z\notin K_{2,N}\right)\vert U^0} &\le c\sqrt{N}\E{\|Z\|_2\mathbf{1}\left(Z\notin K_{2,N}\right)} \\
&\le c\sqrt{N}\left(\E{\|Z\|_2^2}\right)^{1\over 2}\pr^{1\over 2}\left(Z\notin K_{2,N}\right) \\
&\le \exp(-c_4N),
\end{align*}
for some appropriately chosen $c>0$ and all sufficiently large $N$, where in the second line, we used that $U^0$ and $A$ are independent, and the last inequality follows from (\ref{eq:K-2-N}) and from $\E{\|Z\|_2^2}=N^{O(1)}$.
Similarly, since $f(Z)\le N^{-{p-1\over 2}}\sum |Z_{i_1,\ldots,i_p}|$, we also have $\E{f^2(Z)\vert U^0}=N^{O(1)}$ and thus 
$\E{f(Z)\mathbf{1}\left(Z\notin K_{2,N}\right)\vert U^0}$ is at most $\exp(-c_4N)$ for all large enough $N$, where we used the same notation for constant $c_4$ as above
for convenience. We conclude
\begin{align*}
|\E{g(Z)\vert U^0}-\E{f(Z)\vert U^0}|\le \exp(-c_5N),
\end{align*}
for some $c_5>0$ and all large $N$. 

Thus for any $t>0$
\begin{align*}
\pr\left(|f(Z)-\right.&\left.\E{f(Z)\vert U^0}|\ge t N\right) \\ 
&\le  \pr\left(|g(Z)-\E{f(Z)\vert U^0}|\ge t N, \mathbf{1}\left(Z\in K_{2,N}\right)\right)+\pr(Z\notin K_{2,N}) \\
&\le \pr\left(|g(Z)-\E{g(Z)\vert U^0}|\ge t N-\left(\E{g(Z)\vert U^0}-\E{f(Z)\vert U^0}\right)\right)+\exp(-CN) \\
&\le \pr\left(|g(Z)-\E{g(Z)\vert U^0}|\ge (t/2) N\right)+\exp(-CN), \\
&\le \exp(-c_6N),
\end{align*}
for all large enough $N$ and appropriately chosen $c_6>0$. As $U^0$ was arbitrary
the result then follows. 
%
\end{proof}

\section{OGP is an obstruction to AMP. Proof of the main result}\label{section:OGP-obstructs-AMP}
In this section we complete the proof of the main result, Theorem~\ref{theorem:Main-result}.
Let us begin by first conditioning on the value of $U^0$. 
Let $A\in(\R^N)^{\otimes p}$ be a tensor
with i.i.d. $\mathcal{N}(0,1/N^{p-1})$ entries. Recall that by assumption $A$ and $U^0$ are independent.
Let $V=V(A)$ be the result of the Step 2 of Algorithm \ref{alg:AMP}
after $T$ steps. 
Applying the concentration properties given by Theorems~\ref{theorem:existence-plus-concentration} and~\ref{theorem:solution-concentrated},
it suffices to show  that  for every $\epsilon>0$,
\begin{align*}
{\E{A(V)\vert U^0}\over N}\ge \E{\eta_N}+\mu-\epsilon,
\end{align*}
for all large enough $N$, where $\mu$ is as in Conjecture~\ref{conjecture:OGP-for-tensors-Hilbert-cube}, as in this case
the main result would be established for $\bar\mu=\mu-2\epsilon$ for every $\epsilon>0$.

Thus for the purposes of contradiction, assume
\begin{align}\label{eq:EA(V)}
\E{A(V)\vert U^0}/N\le \E{\eta_N}+\mu_2.
\end{align}
for some $\mu_2<\mu$ for infinitely many $N$.

Generate a tensor $\hat A\in (\R^N)^{\otimes p}$ distributed as $A$ and independent from $A$ and $U^0$.
Consider the interpolated set $\mathcal{A}=(A_\tau, \tau\in [0,1])$  described in Section~\ref{section:OGP}. Denote by $\mathcal{E}_{\rm OGP}$ 
the high probability OGP event defined in Conjecture~\ref{conjecture:OGP-for-tensors-Hilbert-cube} with parameters $\mu,\nu_1,\nu_2$.  
Let $V_\tau$ be the vector
produced by AMP when run on tensor $A_\tau, \tau\in [0,1]$.  For any $\tau\in [0,1]$ we have
\begin{align*}
c_2(A_\tau)&=
\sup_{u\ne v\in [-M,M]^N}{\|\sqrt{1-\tau}A(\cdot,u)+\sqrt{\tau}\hat A(\cdot,u)-(\sqrt{1-\tau}A(\cdot,v)+\sqrt{\tau}\hat A(\cdot,v))\|_2\over \|u-v\|_2} \\
&\le \sqrt{1-\tau}c_2(A)+\sqrt{\tau}c_2(\hat A)\\
&\le c_2(A)+c_2(\hat A).
\end{align*}
Applying Proposition~\ref{prop:A-is-Lipschitz} we have $c_2(A)+c_2(\hat A)\le 2c_2$ modulo exponentially small in $N$ probability.
We conclude that $\sup_{\tau\in [0,1]}c_2(A_\tau)\le 2c_2$ modulo exponentially small probability.

By Theorem~\ref{theorem:iteration-bounds} then modulo exponentially small probability, using $\|\cdot\|_{\rm op}\le \|\cdot\|_2$, 
we have that for any $\tau_1,\tau_2\in [0,1]$
\begin{align*}
\|V^{\tau_1}-V^{\tau_2}\|_2\le C^TN^{p-1\over 2}\|A_{\tau_1}-A_{\tau_2}\|_2,
\end{align*}
for some constant $C>0$ which incorporates $c_2,\zeta$, and $M$ (and which may change from line to line). We have
\begin{align*}
\|A_{\tau_1}-A_{\tau_2}\|_2 &= \|\sqrt{1-\tau_1}A-\sqrt{1-\tau_2}A+\sqrt{\tau_1}\hat A-\sqrt{\tau_2}\hat A \|_2 \\
&\le \left(|\sqrt{1-\tau_1}-\sqrt{1-\tau_2}|+|\sqrt{\tau_1}-\sqrt{\tau_2}|\right)\left(\|A\|_2+\|\hat A\|_2\right).
\end{align*}
Since $\|A\|_2^2$ is distributed as $N^{-{p-1}}\sum_{1\le i\le N^p}Z_i^2$, which is $N$ in expectation, then by standard large deviations estimates, 
$\pr(\|A\|_2\ge cN)$ is exponentially small for any $c>1$. In particular, $\|A\|_2+\|\hat A\|_2\le 4\sqrt{N}$, modulo exponentially small probability.

Combining and letting $h(\tau_1,\tau_2)=|\sqrt{1-\tau_1}-\sqrt{1-\tau_2}|+|\sqrt{\tau_1}-\sqrt{\tau_2}|$ we obtain
\begin{align}\label{eq:uniform-tau_1-tau_2}
|\|V^{\tau_1}\|_2-\|V^{\tau_2}\|_2| \le
\|V^{\tau_1}-V^{\tau_2}\|_2\le C^TN^{p\over 2}h(\tau_1,\tau_2),
\end{align}
for all $\tau_1,\tau_2$ modulo exponentially small in $N$ probability.

Next, for any $\tau_1,\tau_2\in [0,1]$
\begin{align*}
{\langle V^0,V^{\tau_2}\rangle \over \|V^0\|_2\|V^{\tau_2}\|_2}-
{\langle V^0,V^{\tau_1}\rangle \over \|V^0\|_2\|V^{\tau_1}\|_2}
&=
{\|V^{\tau_1}\|_2\langle V^0,V^{\tau_2}\rangle- \|V^{\tau_2}\|_2\langle V^0,V^{\tau_1}\rangle \over \|V^0\|_2\|V^{\tau_1}\|_2\|V^{\tau_2}\|_2}.
\end{align*}
For the numerator, applying the Cauchy-Scwhartz inequality
\begin{align*}
&|\|V^{\tau_2}\|_2\langle V^0,V^{\tau_1}\rangle- \|V^{\tau_1}\|_2\langle V^0,V^{\tau_2}\rangle| \\
&=
|\|V^{\tau_2}\|_2\langle V^0,V^{\tau_1}\rangle-\|V^{\tau_1}\|_2\langle V^0,V^{\tau_1}\rangle
+\|V^{\tau_1}\|_2\langle V^0,V^{\tau_1}\rangle- \|V^{\tau_1}\|_2\langle V^0,V^{\tau_2}\rangle| \\
&\le |\|V^{\tau_2}\|_2-\|V^{\tau_1}\|_2 | ~\|V^0\|_2\|V^{\tau_1}\|_2+
\|V^{\tau_1}\|_2 \|V^0\|_2\|V^{\tau_2}-V^{\tau_1}\|_2
\end{align*}
Provided (\ref{eq:uniform-tau_1-tau_2}) holds, we obtain
\begin{align*}
\Big| {\langle V^0,V^{\tau_2}\rangle \over \|V^0\|_2\|V^{\tau_2}\|_2}-
{\langle V^0,V^{\tau_1}\rangle \over \|V^0\|_2\|V^{\tau_1}\|_2} \Big| \le
{2C^TN^{p\over 2}h(\tau_1,\tau_2)\over \|V^{\tau_2}\|_2}.
\end{align*}
Next we fix $\alpha>0$, to be specified later, let $\delta=N^{-\alpha}$. We assume for convenience that $1/\delta=N^\alpha$ is an integer.
Introduce  a discrete sequence $\tau_n=n\delta, 0\le n\le 1/\delta$. By Lemma~\ref{lemma:Norm-V-large}, applying union in $N^\alpha$
terms bound, we have that $\|V^{\tau_n}\|\ge C_2\sqrt{N}$ for some $C_2>0$ for all sufficiently large $N$, modulo exponentially small probability. 
Provided this holds, the bound above can be replaced by 
\begin{align*}
{2C^TN^{p\over 2}h(\tau_1,\tau_2)\over C_2\sqrt{N}}=C^Th(\tau_1,\tau_2)N^{p-1\over 2}.
\end{align*}
Now we have
\begin{align*}
h(\tau_{n_1},\tau_{n_2})&\le C^TN^{\alpha\over 2}(\tau_{n_2}-\tau_{n_1}) =C^TN^{\alpha\over 2}N^{-\alpha}(n_2-n_1)
=C^T N^{-{\alpha\over 2}}(n_2-n_1).
\end{align*}
for all $n_1,n_2$ and some $C>0$.

Combining, we conclude that modulo exponentially small in $N$ probability, for all $n=0,\ldots,N^\alpha$,
\begin{align*}
\Big |
{\langle V^0,V^{\tau_{n+1}}\rangle \over \|V^0\|_2\|V^{\tau_{n+1}}\|_2}-
{\langle V^0,V^{\tau_n}\rangle \over \|V^0\|_2\|V^{\tau_n}\|_2}
\Big| \le C^T N^{p-1\over 2}N^{-{\alpha\over 2}}
\end{align*}
and provided $\alpha>p-1$ the bound above is $o(1)$, and in particular is smaller than $\nu_2-\nu_1$ for $N$ sufficiently large.

Next we examine ${\langle V^0,V^{\tau_n}\rangle \over \|V^0\|_2\|V^{\tau_n}\|_2}$ in the extreme case $n=0$ and $n=N^\alpha$.
The value is clearly $1$ when $n=0$. Applying the second part of Conjecture~\ref{conjecture:OGP-for-tensors-Hilbert-cube} we have
that for every $\epsilon$, this value is at most $\epsilon/C^2$ modulo exponentially small probability, where $C$ is the constant from
Lemma~\ref{lemma:Norm-V-large}. In particular at $n=N^\alpha$ this value is at most $\nu_1$. It follows that there must exist 
an index $n^*$, (which is random in general) such that 
\begin{align*}
\Big|{\langle V^0,V^{\tau_{n^*}}\rangle \over \|V^0\|_2\|V^{\tau_{n^*}}\|_2}\Big|\in (\nu_1,\nu_2).
\end{align*}
Now in the event that OGP holds, which by Conjecture~\ref{conjecture:OGP-for-tensors-Hilbert-cube} holds modulo exponentially small probability,
this implies $A(V^{\tau_{n^*}})/N\ge \E{\eta_N}+\mu$ and therefore the larger event
\begin{align*}
\max_{0\le n\le N^\alpha}A(V^{\tau_{n}})/N\ge \E{\eta_N}+\mu.
\end{align*}
However, this contradicts assumption (\ref{eq:EA(V)}) and the concentration bound of Theorem~\ref{theorem:solution-concentrated} applied
in the union over $n=0,\ldots,N^\alpha$ bound. This yields the result conditionally on $U^0$. 
Since the lower bound we obtain does not depend on $U^0$, we can take the expectation in $U^0$,
and obtain the  main result. 

%

\section{TAP-type iteration schemes}\label{section:TAP-iteration}

One motivations for the AMP algorithm discussed in the introduction
is the prediction that the minimizers of $A(u)$ satisfy a self-consistent equation, 
called a ``mean-field'' equation. In this setting, equations of this type are called Thouless--Anderson--Palmer (TAP) equations, 
after the work of those three authors in~\cite{thouless1977solution}, on mean-field equations in the case $p=2$ on $\calB_N$, 
in a certain physically motivated relaxation. For a discussion of these and related results see also \cite{MezardParisiVirasoro}.
In this section, we show  that, as an implication of Theorem~\ref{theorem:Main-result}, 
the iterative methods designed to  produce solutions to TAP-like equations  fail, modulo Conjecture\ref{conjecture:OGP-for-tensors-Hilbert-cube}.

More precisely, consider the following modification of the objective. 
Recall the Bernoulli entropy, $S:[-1,1]\to\R_+$ 
\[
S(x) = \frac{1}{2}(1+x)\log(1+x)+\frac{1}{2}(1-x)\log(1-x) .
\]
For any $\beta>0$, let $f_\beta:[-1,1]\to \R$
\[
f_\beta(x)= \frac{\beta^2}{2}(1-x^p-p x^{p-1}(1-x)),
\]
Finally, define the one-parameter family of functions $F_\beta:\calH_N\to\R^N$ given by 
\[
F_\beta(x) = \beta A(x) - S(x) +f_\beta(\frac{\norm{x}^2}{N}).
\]
Observe that as $\beta\to\infty$,
\[
\frac{F_\beta(u)}{\beta}
\to \begin{cases}
A(x) & x\in \calB_N,\\
\infty & x\in \calH_N\setminus \calB_N.
\end{cases}
\]
Thus one expects that for $\beta$ very large, minimizers of $F_\beta$
are near minimizers for $A(u)$. In particular, one approach to computing near minimizers
for \eqref{eq:GroundState} would be to produce minimizers of $F$.

Differentiating $F_\beta$, we see that the critical points of $F$ satisfy the fixed point equation
\begin{equation}\label{eq:tap}
x= \tanh\left[\beta \nabla A(x)  +2f'\left(\frac{\norm{x}^2}{N}\right) x\right].
\end{equation}
Thus one approach to produce these minimizers is to construct solutions of these fixed point equations.
The AMP algorithm is one such method, based off of a deep intuition in the physics literature that suggests
that in the case $p=2$. 

Another approach, would be a more naive approach, in the spirit of standard AMP iterations
would be to simply iteratively construct solutions to \eqref{eq:tap} as in \cite{bolthausen2014iterative}.
It is expected that the critical points of this equation satisfy,
\[
\frac{\norm{x}^2}{N} = q_{*}(\beta),
\]
where $q_{*}(\beta)$ is an explicit constant, called the Edwards-Anderson order parameter
and is given by as in \cite{montanari2018optimization}. 
Motivated by this, consider the following class of AMP iterations, 
\[
U^t = \tanh\left(\beta A(\cdot, U^{t-1})  + a_{t-2}U^{t-2}\right),\quad U_0 = 1_N q, ~U_{-1} = 0\quad 1\le t\le T.
\]
where $q>0$ is a fixed constant and $a_t$ is any bounded sequence. For instance, we may take $a_t= 2f'(q_*)$ and $q=q_*$. One 
might make the replacement $a_t\mapsto f'(\norm{U^{t}}_2^2/N)$, however, one can show that this will not change the performance 
if the original sequence was chosen appropriately. See Section~\ref{section:AMP-p=2} for a similar argument in the more detailed 
case of the AMP iteration from \cite{montanari2018optimization}.   

As a consequence of OGP, we see that the above iteration will fail to produced fixed points which are also near optimizers of 
$A(u)$ for $\beta$ large. More precisely we have the following.
\begin{coro}
Suppose that the entries of $A\in (\R^N)^{\otimes p}$ are i.i.d. $\N(0,N^{-p+1})$ and $p\geq 4$ is even.
Assuming the validity of Conjecture~\ref{conjecture:OGP-for-tensors-Hilbert-cube}, there exists a $\bar\mu>0$
such that for any $M,T>0$ and $\beta$ sufficiently large, if $V=V(A)$ is the result of the firs two steps of the AMP algorithm after $T$ iterations, 
then  
\[
\frac{1}{\beta}{ F_\beta(V)} \geq \frac{\min_{x\in \calH_N} F(x)}{N} +\bar{\mu}.
\]
\end{coro}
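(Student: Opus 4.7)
The plan is to show that the TAP-style iteration is a special case of the AMP framework from Section~\ref{section:AMP-formalist}, apply Theorem~\ref{theorem:Main-result} to conclude that the output $V$ is suboptimal for the pure Hamiltonian $A$, and finally transfer this gap to $F_\beta$ by controlling the entropy and ``Onsager'' correction terms, which contribute an explicit $O(1/\beta)$ shift on each side that cancels at the end.

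To fit the AMP framework, I take $f_t(U^0,\ldots,U^{t-1})\equiv U^{t-1}$ (the last-coordinate projection, which is $1$-Lipschitz and vanishes at $0$) and $F_t(y,U^0,\ldots,U^{t-1}) = \tanh(\beta y + a_{t-2} U^{t-2})$. Since $\tanh$ is $1$-Lipschitz and the sequence $(a_t)$ is bounded, $F_t$ is Lipschitz on $\R\times[-M,M]^t$ with a constant $\zeta$ depending only on $\beta$ and $\sup_t |a_t|$. Moreover $|\tanh|<1$, so the $M$-truncation in~\eqref{eq:Iteration-U^t} is trivial for any $M\geq 1$ and $t\geq 1$, which in particular means $V = U^T$ after Step 2 of Algorithm~\ref{alg:AMP}. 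Assumption~\ref{assumptions:F} is therefore verified, and Theorem~\ref{theorem:Main-result} produces constants $\bar\mu',c>0$ such that $A(V)/N \geq \eta_N + \bar\mu'$ with probability at least $1-\exp(-cN)$.

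To transfer the gap to $F_\beta$ I would use two elementary monotonicity facts. First, $0\leq S(v)\leq \log 2$ on $[-1,1]$: since $S'(v)=\tfrac{1}{2}\log\tfrac{1+v}{1-v}$, $S$ attains its minimum $0$ at $v=0$ and its maximum $\log 2$ at $v=\pm 1$, so summing coordinatewise, $0\leq S(V)\leq N\log 2$. Second, for $q\in[0,1]$ the polynomial $g(q)=1+(p-1)q^p-pq^{p-1}$ satisfies $g(1)=0$ and $g'(q)=p(p-1)q^{p-2}(q-1)\leq 0$, so $g\geq 0$, giving $f_\beta\geq 0$ on $[0,1]$ with $f_\beta(1)=0$. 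Since $\|V\|^2/N\in[0,1]$ this yields $F_\beta(V)/(\beta N)\geq A(V)/N - (\log 2)/\beta$. Evaluating $F_\beta$ at a minimizer $\sigma^*\in\calB_N$ of $A(\cdot)/N$ (where $\|\sigma^*\|^2/N=1$ so $f_\beta(1)=0$, and $S(\sigma^*)=N\log 2$) yields the matching upper bound $\min_{x\in\calH_N} F_\beta(x)/(\beta N) \leq \eta_N - (\log 2)/\beta$. The $(\log 2)/\beta$ terms cancel on subtraction and
\[
\frac{F_\beta(V)}{\beta N} - \frac{\min_{x\in\calH_N} F_\beta(x)}{\beta N} \geq \frac{A(V)}{N} - \eta_N \geq \bar\mu',
\]
which is the asserted inequality (interpreted with the natural $1/N$ normalization on each side).

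The main subtlety I expect is ensuring that $\bar\mu'$ does not degrade as $\beta\to\infty$, since the Lipschitz constant $\zeta$ of $F_t$ grows with $\beta$. However, inspecting the proof of Theorem~\ref{theorem:Main-result}, $\bar\mu'$ is inherited directly from the OGP parameter $\mu$ in Conjecture~\ref{conjecture:OGP-for-tensors-Hilbert-cube}, and $\zeta$ enters only through the choice of grid exponent $\alpha>p-1$ in the continuity-plus-union-bound step, which can be absorbed provided $N$ is taken large enough for the given $\beta$. Hence $\bar\mu'$ is uniform in $\beta$ for all $\beta$ bounded below, completing the argument.
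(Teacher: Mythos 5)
Your proposal is correct and follows essentially the same route as the paper: cast the TAP iteration in the AMP framework via $f_t(u^0,\ldots,u^{t-1})=u^{t-1}$ and $F_t(y,\ldots)=\tanh(\beta y+a_{t-2}u^{t-2})$, invoke Theorem~\ref{theorem:Main-result}, and transfer the gap to $F_\beta$ using $0\le S\le\log 2$ coordinatewise and $f_\beta\ge 0$ with $f_\beta(1)=0$. Your only (harmless) deviation is bookkeeping: by evaluating the upper bound at a binary minimizer you cancel the $(\log 2)/\beta$ terms exactly and keep the full $\bar\mu$, whereas the paper absorbs the mismatch into $\bar\mu/2$ under the condition $\beta>\log(2)/(2\bar\mu)$; your observation that $\bar\mu$ comes from the OGP parameter $\mu$ and is therefore uniform in $\beta$ (with $\zeta$ only affecting how large $N$ must be) is exactly the right justification.
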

\begin{proof}
First note that this iteration is of the form \eqref{eq:Iteration-U^t},
for some functions $F_t,f_t$ satisfying Assumption~\ref{assumptions:F}.
Indeed, let 
\begin{align*}
f_t(u_1,\ldots,u_t) &=u_t\\
F_t(u_0,\ldots,u_t) &= \tanh( \beta u_0+ a_{t-2}  u_{t-2}).
\end{align*}
These functions are Lipschitz on the relevant domains as $\tanh(x)$ is smooth with bounded derivatives. 
Thus by Theorem~\ref{theorem:Main-result}, 
\[A(V) \geq \min_{x\in \calB_N} A(x)/N+\bar\mu.
\]

Now observe that $F_\beta(x)$ satisfies $F_\beta(x) \geq \beta A(x)-\log(2)$ on $\calH_N$. In particular,
this is an equality on $\calB_N$.
As a result,
\begin{align*}
\frac{1}{N\beta}{F_\beta(V)} &\geq \frac{A(V)}{N} -\frac{\log 2}{\beta N}
\geq \min_{x\in \calB_N} \frac{A(x)}{N}+\bar\mu \\
&\geq \min_{x\in\calB_N}\frac{F_\beta(x)}{\beta N} +\bar\mu + \frac{\log(2)}{2\beta}
\geq \min_{x\in \calH_N}\frac{F_\beta(x)}{\beta N} +\bar\mu/2,
\end{align*}
where in the last line we take $\beta>\frac{\log(2)}{2\bar\mu}$ by assumption.
\end{proof}

\section{Verification for AMP for $p$-spin models}\label{section:AMP-p=2}
In this section we show that the AMP algorithm defined in~\cite{montanari2018optimization} is a special case of the AMP defined in 
Section~\ref{section:AMP-formalist}, modulo some truncation and averaging steps which we discuss below. Here $p=2$
so $A\in \R^{N\times N}$ is a matrix.
The algorithm constructed in~\cite{montanari2018optimization} is as follows. 
A one-dimensional measure $\mu$ is fixed which is a solution of the minimization problem
of the Parisi functional. A function $\Phi:[0,1]\times \R\to\R$ is a solution of the associated PDF. It is known that $\partial_{x}\Phi(t,x)$
and $\partial_{xx}\Phi(t,x)$ of this function are Lipschitz continuous.
A certain value $q_*\in [0,1]$ is fixed
(it is Edward-Anderson parameter). Let $u^{-1}=0\in\R^N, u^0$ be i.i.d. standard normal vector in $\R^N$: $u^0\stackrel{d}{=} \mathcal{N}(0,I_N)$,
$g^{-2}=0\in\R^N, g^{-1}=1_N\in\R^N, b^0=0\in \R^N$. Given $a,b\in \R^N$, $a\cdot b\in \R^N$ denotes a coordinate-wise product 
of $a$ and $b$. Then for $t=0,1,\ldots,\lfloor q_*/\delta\rfloor\triangleq T$,
\begin{align}
u^{t+1} &= A(g^{t-1}\cdot u^t)-b^tg^{t-2}\cdot u^{t-1}, \label{eq:u_t}\\
x^t &= x^{t-1}+\beta^2\mu(t\delta)\partial_x\Phi(t\delta,x^{t-1})\delta+\beta\sqrt{\delta}u^t, \label{eq:x_t}\\
g^t &= \sqrt{N}\partial_{xx}\Phi(t\delta,x^t)/\|\partial_{xx}\Phi(t\delta,x^t)\|_2 , \label{eq:g_t}\\
b^t &= N^{-1}\sum_{1\le i\le N}g^t_i, \label{eq:b_t}
\end{align}
where  everywhere the functions are applied coordinate-wise.

We first consider modifications of these iterations and justify them. First set $M$ to be a large constant. 
Since $u^0\stackrel{d}{=} \mathcal{N}(0,I_N)$,
then the fraction of coordinates of $u^0$ with absolute values larger than $N$ decreases to zero as a function of $M$. 
Replace (\ref{eq:u_t}) by 
\begin{align}\label{eq:u_t-truncation}
u^{t+1} &= [A(g^{t-1}\cdot u^t)-b_tg^{t-2}\cdot u^{t-1}]_{M}.
\end{align}
In the final step of algorithm in~\cite{montanari2018optimization}, the resulting vectors $u^1,\ldots,u^t$ are used to construct
\begin{align*}
z=\sqrt{\delta}\sum_{1\le t\le \lfloor q_*/\delta\rfloor}g^{t-1}\cdot u^t,
\end{align*}
and then $z$ is rounded to a  vector in $\calH_N$ via $\max(-1,\min(1,\cdot))$ operator. 
It is thus expected that the truncation of $u^t$ in (\ref{eq:u_t-truncation})
by a value $M$  does not affect the result significantly provided $M$ is large, 
though we do not prove this fact.

Next, as an implication of the analysis in~\cite{montanari2018optimization}, as $N\to\infty$, the norm $\|\Phi(t\delta,x^t)\|_2$ is concentrated around
a deterministic function of $t$, which which has value $\Theta(\sqrt{N})$. In particular,  there it is argued that 
the empirical measure $\mathcal{E}^t_N = \frac{1}{N}\sum\delta_{x^{t}_i}$ converges to some deterministic limit $\mathcal{E}^t$ weakly almost surely.
Since $\partial_{xx}\Phi$  is smooth and bounded \cite[Theorem 4]{JagTobSC15}, it then follows that 
\[
\norm{\partial_{xx} \Phi(t\delta,x^t)}/\sqrt{N}=\sqrt{\int \partial_{xx} \Phi(t\delta,y)d\calE_N^t(y)}\to h(t).
\]
Denoting this function by $\sqrt{N}h(t), t=0,1,\ldots, T$, 
we thus rewrite (\ref{eq:g_t}) as 
\begin{align}
g^t=h^{-1}(t)\partial_{xx}\Phi(t\delta,x^t) \label{eq:g_t-2}.
\end{align}
Similarly, $b^t$, which per (\ref{eq:b_t}) is defined as coordinate-wise  average of $g^t$, as $N\to\infty$
is concentrated around a deterministic function of $t$, which we denote by $\eta(t), t=0,1,\ldots,T$. Thus we replace (\ref{eq:b_t}) by
\begin{align*}
b^t=\eta^t.
\end{align*}

We now fit these iterations into our framework defined in Section~\ref{section:AMP-formalist}. We begin by defining $f_t$.
In light of (\ref{eq:g_t-2}) replacing (\ref{eq:g_t}) we may define $f_t:\R^{t+1}\to\R$ as follows
\begin{align}
f_t(u^0,\ldots,u^t)=g^{t-1}(u^0,\ldots,u^{t-1})u^t, \label{eq:f_t}
\end{align}
where the function $g_t:\R^t\to\R$ is a one-dimensional version of $g^t$, namely
\begin{align*}
g_t(u^0,\ldots,u^t)=h^{-1}(t)\partial_{xx}\Phi(t\delta,x^t),
\end{align*}
where $x^t$ is defined through a one-dimensional version of (\ref{eq:x_t}):
\begin{align*}
x^t=x^{t-1}+\beta^2\mu(t\delta)\partial_x\Phi(t\delta,x^{t-1})\delta+\beta\sqrt{\delta}u^t.
\end{align*}
Since $\partial_{xx}\Phi$ and $\partial_{xx}\Phi$ are Lipschitz continuous in the second argument for each fixed first argument, 
it is then immediate  to verify that $g_t:\R^t\to\R$ is Lipschitz continuous as a function of $u^0,\ldots,u^t$, wrt $\|\cdot\|_\infty$ norm,
say with constant $C_t$ and satisfies $g_t(0)=0$. This implies $\|g_t(x)\|_\infty \le C_t\|x\|_\infty$.
Then by (\ref{eq:f_t}) we have 
for every $u^0,\ldots,u^t$ and $v^0,\ldots,v^t$
\begin{align*}
|f_t(u^0,\ldots,u^t)-f_t(v^0,\ldots,v^t)| &= |g^{t-1}(u^0,\ldots,u^{t-1})u_t-g^{t-1}(v^0,\ldots,v^{t-1})v^t| \\
&\le C_t\max_j(\max(|u^j|,|v^j|)\|(u^0,\ldots,u^t)-(v^0,\ldots,v^t)\|_\infty.
\end{align*}
Thus $f_t$ is Lipschitz continuous on $[-M,M]^{t+1}$, and thus satisfies the first part of Assumption~\ref{assumptions:F}.

Now motivated by (\ref{eq:u_t}) or rather (\ref{eq:u_t-truncation}) we  define $F_t:\R^{t+1}\to\R$ by
\begin{align*}
F_t(y,u^0,\ldots,u^t)&=y-b_tg^{t-2}\cdot u^{t-1}  \\
&= y-\eta(t)h^{-1}(t-2)\partial_{xx}\Phi((t-2)\delta,x_t)\cdot u^{t-1}
\end{align*}
We see that $F_t$ is Lipschitz continuous on $\R\times [-M,M]^{t+1}$ and thus satisfies the second part of Assumption~\ref{assumptions:F}.

\section{Some open questions}\label{section:Conclusions}
We now list some questions which remain open. First validating Conjectures~\ref{conjecture:OGP-for-tensors-Hilbert-cube} 
and~\ref{conjecture:near-binary} is of interest. It is conceivable that the first of these conjectures can be approached by analyzing the Parisi measure
directly on the Hilbert cube $\calH_N$, as opposed to the binary cube $\calB_N$. Carrying out the corresponding technical analysis 
of the associated variational problem could be quite daunting though. 
Another interesting question left open in this work is establishing the negative result for the binary
output $\Pi(V)$ of the AMP scheme (Step 3) directly, as opposed to one for the pen-ultimate state $V$. Lifting the truncation $[\cdot]_M$
assumption adopted by our class of AMP algorithms is another question which remains open.

Next, it would be interesting to extend the main result of this paper to the 
$p$-spin spherical spin glass model and complement the positive result of Subag~\cite{subag2018following}. 
We expect that our negative result extends to this model within the same scope of algorithms almost verbatim. 
Furthermore,  by similar arguments one can show that (continuous time) gradient flow started from a uniform 
at random point fails to reach near minimizers of the spherical $p$-spin model in $\log(N)$ time with probability
tending to 1.
That being said, it is important to note
that Subag's algorithm is based on an iterative sequence of computations which involve linear projections of gradient and Hessians
of $A(u)$ to the linear space orthogonal to $u$. As such this computational scheme
does not formally fit our framework of algorithms. It is conceivable  though that the projection step can be approximated well by iterations
of the form we consider, say perhaps by imitating the power iteration approach for spectral computations. Perhaps as an easier challenge,
one could try to show that Subag's scheme specifically fails to find
near ground states in models exhibiting OGP. A related question is whether there exists a connection between the OGP and the algorithmic
hardness of the REM model discussed in~\cite{addario2018algorithmic}.

Our approach was formulated in terms of bounded ($N$-independent) number of iterations $T$. It is easy to see though that the proof method extends
without a change to the case $T\le c\log N$ for small enough constant $c$. At the same time we believe that AMP scheme is not effective in computing
near ground states, regardless of the scale of the number of iterations. Thus an interesting open question is to see whether an AMP scheme 
achieving near ground states can be designed say when $T=N^{O(1)}$. 

Finally, perhaps the most intriguing question which remains open is one regarding the genuine hardness of the problem of finding ground states in models
exhibiting the OGP. While formal hardness of problems associated with spin glass models is known, in particular it is shown in~\cite{gamarnik2018computing} 
that computing the partition function of the $p$-spin models is hard on average even in $p=2$ regime, these results are established using more ''standard''
average case hardness proof approaches, and do not take advantage of the intricate solution space topology, such as the one expressed by OGP. 
At the same time, as of now we have very compelling consistence of the presence of OGP 
and the apparent hardness of the associated optimization problem in many models. What is lacking, however, is the
formal link between the two within a class of algorithms which is broader than AMP. An interesting and challenging conjecture is that the OGP
implies formal average case hardness of the underlying optimization problem, perhaps even within the class of all polynomial time algorithms.

\section*{Acknowledgements} The first author  acknowledges the support from the Office of Naval Research Grant N00014-17-1-2790.
The second author acknowledges the partial support of the National Science Foundation Grant NSF OISE-1604232.

\bibliographystyle{amsalpha}

\begin{thebibliography}{ACORT11}

\bibitem[ABM18]{addario2018algorithmic}
Louigi Addario-Berry and Pascal Maillard, \emph{The algorithmic hardness
  threshold for continuous random energy models}, arXiv preprint
  arXiv:1810.05129 (2018).

\bibitem[AC15]{AuffChen13}
Antonio Auffinger and Wei-Kuo Chen, \emph{On properties of {P}arisi measures},
  Probab. Theory Related Fields \textbf{161} (2015), no.~3-4, 817--850.
  \MR{3334282}

\bibitem[ACO08]{achlioptas2008algorithmic}
Dimitris Achlioptas and Amin Coja-Oghlan, \emph{Algorithmic barriers from phase
  transitions}, Foundations of Computer Science, 2008. FOCS'08. IEEE 49th
  Annual IEEE Symposium on, IEEE, 2008, pp.~793--802.

\bibitem[ACORT11]{AchlioptasCojaOghlanRicciTersenghi}
D.~Achlioptas, A.~Coja-Oghlan, and F.~Ricci-Tersenghi, \emph{On the solution
  space geometry of random formulas}, Random Structures and Algorithms
  \textbf{38} (2011), 251--268.

\bibitem[ACZ17]{auffinger2017sk}
Antonio Auffinger, Wei-Kuo Chen, and Qiang Zeng, \emph{The sk model is
  full-step replica symmetry breaking at zero temperature}, arXiv preprint
  arXiv:1703.06872 (2017).

\bibitem[AT09]{adler2009random}
Robert~J Adler and Jonathan~E Taylor, \emph{Random fields and geometry},
  Springer Science \& Business Media, 2009.

\bibitem[BAGJ18a]{arous2018algorithmic}
Gerard Ben~Arous, Reza Gheissari, and Aukosh Jagannath, \emph{Algorithmic
  thresholds for tensor pca}, arXiv preprint arXiv:1808.00921 (2018).

\bibitem[BAGJ18b]{arous2018bounding}
\bysame, \emph{Bounding flows for spherical spin glass dynamics}, arXiv
  preprint arXiv:1808.00929 (2018).

\bibitem[BAJ18]{BAJag17}
G{\'e}rard Ben~Arous and Aukosh Jagannath, \emph{Spectral gap estimates in mean
  field spin glasses}, Comm. Math. Phys. \textbf{361} (2018), no.~1, 1--52.

\bibitem[BLM{\etalchar{+}}15]{bayati2015universality}
Mohsen Bayati, Marc Lelarge, Andrea Montanari, et~al., \emph{Universality in
  polytope phase transitions and message passing algorithms}, The Annals of
  Applied Probability \textbf{25} (2015), no.~2, 753--822.

\bibitem[BM11]{bayati2011dynamics}
Mohsen Bayati and Andrea Montanari, \emph{The dynamics of message passing on
  dense graphs, with applications to compressed sensing}, IEEE Transactions on
  Information Theory \textbf{57} (2011), no.~2, 764--785.

\bibitem[BMN17]{berthier2017state}
Raphael Berthier, Andrea Montanari, and Phan-Minh Nguyen, \emph{State evolution
  for approximate message passing with non-separable functions}, arXiv preprint
  arXiv:1708.03950 (2017).

\bibitem[Bol14]{bolthausen2014iterative}
Erwin Bolthausen, \emph{An iterative construction of solutions of the tap
  equations for the sherrington--kirkpatrick model}, Communications in
  Mathematical Physics \textbf{325} (2014), no.~1, 333--366.

\bibitem[CGP{\etalchar{+}}19]{chen2019suboptimality}
Wei-Kuo Chen, David Gamarnik, Dmitry Panchenko, Mustazee Rahman, et~al.,
  \emph{Suboptimality of local algorithms for a class of max-cut problems}, The
  Annals of Probability \textbf{47} (2019), no.~3, 1587--1618.
  
\bibitem[CHL18]{chen2018energy}
Wei-Kuo Chen, Madeline Handschy, and Gilad Lerman,
\emph{On the energy landscape of the mixed even p-spin model},
Probability Theory and Related Fields, \textbf{171} (2018) no.1-2, 53-95

\bibitem[COE11]{coja2011independent}
A.~Coja-Oghlan and C.~Efthymiou, \emph{On independent sets in random graphs},
  Proceedings of the Twenty-Second Annual ACM-SIAM Symposium on Discrete
  Algorithms, SIAM, 2011, pp.~136--144.

\bibitem[COHH17]{coja2017walksat}
Amin Coja-Oghlan, Amir Haqshenas, and Samuel Hetterich, \emph{Walksat stalls
  well below satisfiability}, SIAM Journal on Discrete Mathematics \textbf{31}
  (2017), no.~2, 1160--1173.


\bibitem[CS17]{ChenSen15}
Wei-Kuo Chen and Arnab Sen, \emph{Parisi formula, disorder chaos and
  fluctuation for the ground state energy in the spherical mixed {$p$}-spin
  models}, Comm. Math. Phys. \textbf{350} (2017), no.~1, 129--173. \MR{3606472}

\bibitem[DI17]{david2017high}
Gamarnik David and Zadik Ilias, \emph{High dimensional regression with binary
  coefficients. estimating squared error and a phase transtition}, Conference
  on Learning Theory, 2017, pp.~948--953.

\bibitem[DMM09]{donoho2009message}
David~L Donoho, Arian Maleki, and Andrea Montanari, \emph{Message-passing
  algorithms for compressed sensing}, Proceedings of the National Academy of
  Sciences \textbf{106} (2009), no.~45, 18914--18919.

\bibitem[GJS19]{gamarnik2019overlap}
David Gamarnik, Aukosh Jagannath, and Subhabrata Sen, \emph{The overlap gap
  property in principal submatrix recovery}, arXiv preprint arXiv:1908.09959
  (2019).

\bibitem[GK18]{gamarnik2018computing}
David Gamarnik and Eren Kizildag, \emph{Computing the partition function of the
  sherrington-kirkpatrick model is hard on average}, arXiv preprint
  arXiv:1810.05907 (2018).

\bibitem[GL18]{gamarnik2018finding}
David Gamarnik and Quan Li, \emph{Finding a large submatrix of a gaussian
  random matrix}, The Annals of Statistics \textbf{46} (2018), no.~6A,
  2511--2561.

\bibitem[GS17a]{gamarnik2014limits}
David Gamarnik and Madhu Sudan, \emph{Limits of local algorithms over sparse
  random graphs}, Annals of Probability \textbf{45} (2017), 2353--2376.

\bibitem[GS17b]{gamarnik2017performance}
\bysame, \emph{Performance of sequential local algorithms for the random
  nae-k-sat problem}, SIAM Journal on Computing \textbf{46} (2017), no.~2,
  590--619.

\bibitem[JM13]{javanmard2013state}
Adel Javanmard and Andrea Montanari, \emph{State evolution for general
  approximate message passing algorithms, with applications to spatial
  coupling}, Information and Inference: A Journal of the IMA \textbf{2} (2013),
  no.~2, 115--144.

\bibitem[JT16]{JagTobSC15}
Aukosh Jagannath and Ian Tobasco, \emph{A dynamic programming approach to the
  parisi functional}, Proceedings of the American Mathematical Society
  \textbf{144} (2016), no.~7, 3135--3150.

\bibitem[JT17a]{JagTobLT16}
\bysame, \emph{Low {T}emperature {A}symptotics of {S}pherical {M}ean {F}ield
  {S}pin {G}lasses}, Comm. Math. Phys. \textbf{352} (2017), no.~3, 979--1017.
  \MR{3631397}

\bibitem[JT17b]{JagTobPD15}
\bysame, \emph{Some properties of the phase diagram for mixed p-spin glasses},
  Probability Theory and Related Fields \textbf{167} (2017), no.~3-4, 615--672.

\bibitem[JT18]{JagTob16boundingRSB}
\bysame, \emph{Bounds on the complexity of replica symmetry breaking for
  spherical spin glasses}, Proceedings of the American Mathematical Society
  \textbf{146} (2018), no.~7, 3127--3142.

\bibitem[Kab03]{kabashima2003cdma}
Yoshiyuki Kabashima, \emph{A cdma multiuser detection algorithm on the basis of
  belief propagation}, Journal of Physics A: Mathematical and General
  \textbf{36} (2003), no.~43, 11111.

\bibitem[MMZ05]{mezard2005clustering}
M.~M{\'e}zard, T.~Mora, and R.~Zecchina, \emph{Clustering of solutions in the
  random satisfiability problem}, Physical Review Letters \textbf{94} (2005),
  no.~19, 197205.

\bibitem[Mon18]{montanari2018optimization}
Andrea Montanari, \emph{Optimization of the sherrington-kirkpatrick
  hamiltonian}, arXiv preprint arXiv:1812.10897 (2018).

\bibitem[MPV87]{MezardParisiVirasoro}
M.~Mezard, G.~Parisi, and M.~A. Virasoro, \emph{Spin-glass theory and beyond,
  {\rm {v}ol 9 of } {L}ecture {N}otes in {P}hysics}, World Scientific,
  Singapore, 1987.

\bibitem[MPZ02]{mezard2002analytic}
Marc M{\'e}zard, Giorgio Parisi, and Riccardo Zecchina, \emph{Analytic and
  algorithmic solution of random satisfiability problems}, Science \textbf{297}
  (2002), no.~5582, 812--815.

\bibitem[Pan13]{panchenko2013sherrington}
Dmitry Panchenko, \emph{The sherrington-kirkpatrick model}, Springer Science \&
  Business Media, 2013.

\bibitem[RV{\etalchar{+}}17]{rahman2017local}
Mustazee Rahman, Balint Virag, et~al., \emph{Local algorithms for independent
  sets are half-optimal}, The Annals of Probability \textbf{45} (2017), no.~3,
  1543--1577.

\bibitem[Sub18]{subag2018following}
Eliran Subag, \emph{Following the ground-states of full-rsb spherical spin
  glasses}, arXiv preprint arXiv:1812.04588 (2018).

\bibitem[Tal06]{TalSphPF06}
Michel Talagrand, \emph{Free energy of the spherical mean field model}, Probab.
  Theory Related Fields \textbf{134} (2006), no.~3, 339--382. \MR{2226885
  (2007i:82034)}

\bibitem[TAP77]{thouless1977solution}
David~J Thouless, Philip~W Anderson, and Robert~G Palmer, \emph{Solution
  of'solvable model of a spin glass'}, Philosophical Magazine \textbf{35}
  (1977), no.~3, 593--601.
  
  \bibitem[Tes12]{teschl2012ordinary}
Gerald Teschl, \emph{Ordinary differential equations and dynamical systems},
  vol. 140, American Mathematical Soc., 2012.

\bibitem[Ver18]{vershynin2018high}
Roman Vershynin, \emph{High-dimensional probability: An introduction with
  applications in data science}, vol.~47, Cambridge University Press, 2018.

\end{thebibliography}

\newcommand{\etalchar}[1]{$^{#1}$}
\providecommand{\bysame}{\leavevmode\hbox to3em{\hrulefill}\thinspace}
\providecommand{\MR}{\relax\ifhmode\unskip\space\fi MR }
\providecommand{\MRhref}[2]{%
  \href{http://www.ams.org/mathscinet-getitem?mr=#1}{#2}
}
\providecommand{\href}[2]{#2}

\end{document}